\numberwithin{equation}{section}
\newcommand{\NN}{\mathbb{N}}
\newcommand{\RR}{\mathbb{R}}
\newcommand{\TT}{\mathbb{T}}
\newcommand{\ZZ}{\mathbb{Z}}
\def\cF{{\mathcal F}}
\def\cL{{\mathcal L}}
\def\cS{{\mathcal S}}
\newcommand{\findif}[1]{\mathop{\triangle^{#1}}}
\DeclareMathOperator{\supp}{\text{supp}}
\newcommand{\dbar}{d\mkern-6mu\mathchar'26\mkern-2mu} 
\newcommand{\bangle}[1]{\left\langle #1 \right \rangle}
\newtheorem{prop}{Proposition}[section]
\newtheorem{theo}[prop]{Theorem}
\newtheorem{lemm}[prop]{Lemma}
\newtheorem{defi}[prop]{Definition}
\title[Infinite Matrix Reps. of Isotropic Pseudodifferential Operators] {Infinite Matrix Representations of Isotropic Pseudodifferential Operators}
\author{Otis Chodosh}
\date{\today}
\begin{document}

\begin{abstract} We characterize isotropic pseudodifferential operators (elements of the Shubin calculus) by their action on Hermite functions. We show that a continuous linear operator $A:\cS(\RR)\to\cS'(\RR)$ is an isotropic pseudodifferential operator of order $r$ if and only if its ``matrix'' $(K^{(A)})_{m,n} := \bangle{A \phi_n,\phi_m}_{L^2(\RR)}$ is rapidly decreasing away from the diagonal $\{m=n\}$, order $r/2$ in $m+n$, where applying the discrete difference operator along the diagonal decreases the order by one. Here $\phi_{m}$ is the $m$-th Hermite function. As an application, we give an isotropic version of the Beals commutator characterization of pseudodifferential operators, showing that if we define $H = -\partial_{x}^{2} + x^{2}$ to be the harmonic oscillator and $Z$ the map extended linearly from $Z(\phi_{k}) = \phi_{k-1}$, then a continuous linear operator $A:\cS(\RR)\to\cS'(\RR)$ is an isotropic pseudodifferential operator of order $r$ if and only if commuting $A$ $\alpha$ times with $H$ and $\beta$ times with $Z$ results in an bounded linear operator $H^{s+r-2\beta}_{\text{iso}}(\RR) \to H^{s}_{\text{iso}}(\RR)$, for all $s\in \RR$ and $\alpha,\beta \in \NN_{0}$. 
\end{abstract}
\maketitle
\tableofcontents
 \section{Introduction}

In this paper we examine the action of isotropic pseudodifferential operators on the Hermite functions. We give a necessary and sufficient condition for an operator to be an isotropic pseudodifferential operator based on the form of action. This result is a consolidation of results originating in the author's honors thesis, \cite{chod:thesis}, which contains, in addition to the material given here, a proof of a similar theorem for pseudodifferential operators on a torus (which seems to be well known, although a proof could not be located in the literature) as well as more detailed background about Hermite functions and isotropic pseudodifferential operators. In addition to the above result, we give an application of our discretization, proving a commutator characterization of isotropic pseudodifferential operators in the sense of Beals.   

 In order to state our main theorem we first give the following definition:
\begin{defi}\label{defi:SMr}
If we define the discrete difference operator $\triangle$  on a function $K: \NN_0\times\NN_0 \to\RR$ by  \[(\triangle K)(m,n) = K(m+1,n+1) - K(m,n)\] (writing $\triangle^\alpha$ to signify applying the difference operator $\alpha$ times), then we will say that a function $K: \NN_0\times\NN_0 \to\RR$ is a \emph{symbol matrix of order $r$} if for all $\alpha, N \in \NN_0$, there is $C_{\alpha,N} >0$ such that 
\[
| (\triangle^\alpha K)(m,n) | \leq C_{\alpha,N} (1+m + n)^{r -\alpha} (1+|m-n|)^{-N}.
\] We will denote the set of symbol matrices of order $r$, $SM^r(\NN_0)$. 
\end{defi}

With this definition, we can state our main result:
\begin{theo}\label{theo-intro-main}\label{theo-intro-main}
A linear operator, $A:\cS(\RR) \to \cS'(\RR)$ is an isotropic pseudodifferential operator of order $r$ (as defined in Section \ref{sec-Prelim}) if and only if the ``matrix of $A$'' \[K^{(A)}:\NN_0\times \NN_0 \to \RR\] defined by $(m,n)\mapsto \langle A\phi_n, \phi_m\rangle_{L^2(\RR)}$ is an order $r/2$ symbol matrix (where $\phi_n$ is the $n$-th Hermite function, as defined in Section \ref{sec-Prelim}). 
\end{theo}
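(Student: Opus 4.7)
The plan is to use the Hermite basis $\{\phi_n\}$ as the bridge between Shubin calculus and the discrete symbol matrices of Definition~\ref{defi:SMr}. The key tools are the eigenrelations $H\phi_n=(2n+1)\phi_n$, $a\phi_n=\sqrt{n}\,\phi_{n-1}$, $a^*\phi_n=\sqrt{n+1}\,\phi_{n+1}$; the Sobolev characterization $\|\phi_n\|_{H^s_{\text{iso}}}\sim(1+n)^{s/2}$; and the Shubin-calculus facts that an order-$r$ operator maps $H^t_{\text{iso}}\to H^{t-r}_{\text{iso}}$ boundedly and that a commutator of orders $r_1,r_2$ has order $r_1+r_2-2$.

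\textbf{Necessity.} Let $A$ be isotropic of order $r$. From boundedness of $A$ on the Hermite Sobolev scale and duality one has $|K^{(A)}(m,n)|\lesssim(1+n)^{t/2}(1+m)^{(r-t)/2}$ for any $t\in\RR$; optimizing over $t$ yields the diagonal bound $|K^{(A)}(m,n)|\lesssim(1+m+n)^{r/2}$. Off-diagonal decay comes from the identity $K^{(\mathrm{ad}_H^N A)}(m,n)=(2(m-n))^N K^{(A)}(m,n)$: since $\mathrm{ad}_H^N A$ is again of order $r$, the diagonal bound applied to it yields $|K^{(A)}(m,n)|\lesssim_N (1+m+n)^{r/2}(1+|m-n|)^{-N}$. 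For the $\triangle$-estimate, I identify $\triangle K^{(A)}$ as the matrix of $E^{-1}aAa^*E^{-1}-A$ with $E=\sqrt{(H+1)/2}$; a principal-symbol calculation shows this operator has order $r-2$, and iterating gives $\triangle^\alpha K^{(A)}=K^{(B_\alpha)}$ for $B_\alpha$ of order $r-2\alpha$. Applying the previous diagonal-and-off-diagonal estimates to $B_\alpha$ yields the full symbol-matrix bounds.

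\textbf{Sufficiency.} Given $K\in SM^{r/2}$, define $A\colon\cS(\RR)\to\cS'(\RR)$ by $Af=\sum_{m,n}K(m,n)\bangle{f,\phi_n}_{L^2(\RR)}\phi_m$. I decompose $A=\sum_{k\in\ZZ}D_k$ by diagonals, where $D_k\phi_n=K(n+k,n)\phi_{n+k}$. For $k\ge 0$, factor $D_k=(a^*)^k h_k(H)$ where $h_k$ is a smooth function on $\RR_+$ satisfying $h_k(2n+1)=K(n+k,n)/\sqrt{(n+1)(n+2)\cdots(n+k)}$; the symbol-matrix estimates on $K$ translate into classical symbol estimates $|h_k^{(j)}(t)|\lesssim_N(1+t)^{(r-k)/2-j}(1+k)^{-N}$, so $h_k(H)$ is isotropic of order $r-k$ by functional calculus and $D_k$ is therefore of order $r$. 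The rapid decay factor $(1+k)^{-N}$ makes $\sum_k D_k$ converge in the Fr\'echet topology of order-$r$ isotropic operators; the case $k<0$ is analogous with $a^{|k|}$ replacing $(a^*)^k$.

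The main obstacle is the sufficiency direction: passing from the discrete $\triangle^\alpha$-estimates on $K$ to continuous derivative estimates on the interpolating symbols $h_k$ requires care, as does verifying that $g(H)$ is a genuine Shubin operator of order $2s$ whenever $g$ is a classical symbol of order $s$ on $\RR_+$. Maintaining uniform control of these estimates in $k$ is needed for the series to converge in the correct topology, and this is where most of the technical work will lie.
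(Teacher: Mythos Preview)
Your necessity argument is essentially the paper's, only packaged more cleanly. The paper conjugates by powers of $(1+H)$ for the diagonal bound, integrates by parts against $H_x-H_\xi$ for off-diagonal decay (your $\mathrm{ad}_H$ formulation is the same identity), and then computes $\triangle K^{(A)}$ via creation/annihilation operators, arriving at a two-term expression which it bounds directly. Your identification of $\triangle K^{(A)}$ with the matrix of $E^{-1}aAa^*E^{-1}-A=ZAZ^\dagger-A=-[A,Z]Z^\dagger$ (with $Z$ the Hermite shift) is exactly the computation the paper carries out later in its Beals-theorem section, and observing that $[A,Z]\in\Psi^{r-2}_{\text{iso}}$ because commutators in the Shubin calculus drop order by two is arguably cleaner than the paper's direct estimate.

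Your sufficiency argument is genuinely different. The paper does not decompose by diagonals; instead it writes down the candidate symbol $a(x,\xi)$ from the Schwartz kernel, observes that $\partial_x^\alpha\partial_\xi^\beta a$ corresponds (via the Hermite eigenfunction relations) to explicit combinations of shifts and differences of $K^{(A)}$ which are again symbol matrices, now of order $(r-\alpha-\beta)/2$. Once the order is pushed low enough these matrices are $\ell^2$, so the associated operators are Hilbert--Schmidt; this gives weighted $L^2$ bounds on $\partial^{\alpha,\beta}a$, a Sobolev-type lemma (Appendix~\ref{appB}) upgrades these to weighted $L^\infty$ bounds yielding $A\in\Psi^{r+\epsilon}_{\text{iso}}$ for every $\epsilon>0$, and a partial converse to $L^2$-boundedness (Appendix~\ref{appC}) removes the $\epsilon$. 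Your diagonal decomposition $A=\sum_k (a^*)^k h_k(H)$ is more constructive and avoids both appendices, but the cost is precisely what you flag: you need the Shubin functional calculus $g\in S^s(\RR_+)\Rightarrow g(H)\in\Psi^{2s}_{\text{iso}}$ with continuous dependence on $g$ (Helffer--Robert or an almost-analytic extension will do), you must interpolate the discrete data $n\mapsto K(n+k,n)/\sqrt{(n+1)\cdots(n+k)}$ to genuine one-variable symbols with seminorms controlled \emph{uniformly in $k$}, and you must verify that the $\Psi^r_{\text{iso}}$-seminorms of $(a^*)^k h_k(H)$ grow at most polynomially in $k$ before the $(1+k)^{-N}$ factor so that the series converges in the Fr\'echet topology. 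None of these steps is wrong, but each is real work; you may find it easier to factor as $(Z^\dagger)^k\tilde h_k(H)$ with $Z^\dagger=a^*E^{-1}\in\Psi^0_{\text{iso}}$ and $\tilde h_k(2n+1)=K(n+k,n)$, so that both factors sit at fixed orders $0$ and $r$ rather than at diverging orders $k$ and $r-k$.
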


Perhaps the simplest example of this theorem is when $A$ is a power of the harmonic oscillator. It can be shown that for $s\in \RR$, $(1+H)^{s/2} \in \Psi_\text{iso}^{s}(\RR)$ is an elliptic operator, where $H$ is the harmonic oscillator, $H = -\frac{\partial^2}{\partial x^2} + x^2$ (see, for example, \cite{Shubin:PDO}, Theorems  II.10.1 and II.11.2 concerning powers of elliptic operators). Furthermore, because $(1+H)\phi_n = (2+2n)\phi_n$, it can be shown that $(1+H)^{s/2} \phi_n = (2+2n)^{s/2} \phi_n$. Thus
\[
(K^{(1+H)^{s/2}})_{m,n} = \bangle{ (1+H)^{s/2} \phi_n , \phi_m}_{L^2} = (2+2n)^{s/2} \delta_{m,n}.
\]
Here we have written $(K^{(1+H)^{s/2}})_{m,n}$ when we really mean $K^{(1+H)^{s/2}}(m,n)$, a shorthand we will use frequently below. Notice that this is a symbol matrix of order $s/2$: because $(K^{(1+H)^{s/2}})_{m,n} =0$ for $m\not = n$, it is enough to show that 
\[
|(\triangle^\alpha K^{(1+H)^{s/2}})_{n,n} | \leq C_{\alpha} (1+n)^{s/2-\alpha}.
\]  
This follows from the fact that $\partial_x^k(1+x)^s \lesssim (1+x)^{s-k}$ and the mean value theorem. Thus, powers of the harmonic oscillator, which are some of the simplest examples of isotropic pseudodifferential operators satisfy the statement of the theorem. 

Theorem \ref{theo-intro-main} is similar to the situation for pseudodifferential operators on the $d$-dimensional torus $\TT^d$. In this case, it is well known that for an operator $A:C^\infty(\TT^d) \to C^\infty(\TT^d)$, $A$ is an order $r$ pseudodifferential operator on $\TT^d$ if and only if $K^{(A)}(m,n):=\bangle{ A(e^{in\cdot x}), e^{im\cdot x}}){L^2}$ is a symbol matrix of order $r$ (where we extend the definition of symbol matrix to a map $K:\ZZ^d\times \ZZ^d\to \RR$ in the obvious manner, letting the difference operator act in any of the $d$-directions). For the dimension $d=1$, this is very similar to the isotropic case discussed in this paper. However, for higher dimensions, there is not such a simple characterization of the matrices of $\Psi_\text{iso}^r(\RR^d)$. We discuss this further in the end of Section \ref{main-theo:sec}. A proof of the theorem on the torus can be found in \cite{chod:thesis}.

Related forms of discretization of pseudodifferential operators have been studied by various authors. Ruzhansky and Turunen in \cite{Ruzhansky:torus,Ruzhansky:QuantTorus} have considered Fourier series discretization of toroidal pseudodifferential operators	. In their work, they consider ``quantized'' symbols of the form $a(x,\xi) \in C^{\infty}(\TT^{d}\times \ZZ^{d})$ with 
\[ |\partial_{x}^{\alpha} \findif{\beta}  a(x,\xi)  | \leq C_{\alpha,\beta}  (1+|\xi|)^{m-\beta}\]
where $\findif{\beta}$ is the finite difference operator applied $\beta$ times in the $\xi$ variable. These symbols then define a toroidal pseudodifferential operator $A_{a} \in \Psi^{m}(\TT^{n})$ via the action on test functions $\phi \in C^{\infty}(\TT^{d})$ given by 
\[ A_{a}\phi(x) = \sum_{\xi \in\ZZ^{n}} \int_{\TT^{n}} e^{i(x-y)\cdot\xi} a(x,\xi) \phi(y) \dbar y . \] They show that any pseudodifferential operator on the torus has such a symbol, and describe how the quantized symbols relate to the usual notion. They go on to discuss Fourier intergral operators on the torus and applications to hyperbolic equations. Their quantization is somewhat different than the one described in this paper, because they are quantizing only in the $\xi$ variable, and not in the $x$ variable as well. In addition, we remark that in \cite{RuzhanskyTurunen:PSIDOsymmetries}, they have extended this quantization to pseudodifferential operators over general compact Lie groups, and it is interesting to note that they have also observed a different form of off diagonal decay of the symbols in the setting of e.g.\ matrix valued symbols for operators in $\Psi^{m}(\text{SU}(2))$ (cf.\ \cite{RuzhanskyTurunen:PSIDOsymmetries} Chapter 12). 

Finally, our characterization of isotropic pseudodifferential operators allows us to give the following commutator characterization in the sense of Beals \cite{Beals:CommCharPSIDO}. We define the operator $Z:\cS(\RR)\to \cS(\RR)$ by $\phi_{k}\mapsto \phi_{k-1}$ (extending linearly), and recall that the harmonic oscillator is $H = -\frac{\partial^{2}}{\partial x^{2}} + x^{2}$, and from this, we define the commutation operators $\tilde Z(A) := [A,Z]$ and $\tilde H(A) : =[A,H]$, allowing us to state
\begin{theo}\label{theo:iso-beals}
An operator $A :\cS(\RR) \to \cS'(\RR)$ has $A \in \Psi^{r}_{\text{iso}}(\RR)$ if and only if for all $\alpha,\beta \in \NN_{0}$ and $s\in \RR$
\begin{equation}\label{eq:iso-comm-cond} \tilde H^{(\alpha)}(\tilde Z^{(\beta)}(A)) \in \cL(H^{r+s-2\beta}_{\text{iso}}(\RR) \to H^{s}_{\text{iso}}(\RR)). \end{equation}
\end{theo}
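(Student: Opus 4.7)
The plan is to reduce the theorem to the symbol matrix characterization given by Theorem \ref{theo-intro-main}. The key computational step identifies the matrix of the iterated commutator in closed form: using $H\phi_n = (2n+1)\phi_n$, $Z\phi_n = \phi_{n-1}$ (with the convention $\phi_{-1}:=0$), and $Z^{*}\phi_m = \phi_{m+1}$, a direct calculation gives $\langle [A,H]\phi_n,\phi_m\rangle = 2(n-m)K^{(A)}_{m,n}$ and $\langle [A,Z]\phi_n,\phi_m\rangle = K^{(A)}_{m,n-1} - K^{(A)}_{m+1,n} = -(\triangle K^{(A)})(m,n-1)$. Iterating, the matrix of $\tilde H^{(\alpha)}(\tilde Z^{(\beta)}(A))$ is
\[
L^{(\alpha,\beta)}_{m,n} \;=\; 2^\alpha (-1)^\beta (n-m)^\alpha\, (\triangle^\beta K^{(A)})(m,n-\beta).
\]

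For the forward direction, if $A\in \Psi^r_{\text{iso}}(\RR)$ then Theorem \ref{theo-intro-main} says $K^{(A)} \in SM^{r/2}(\NN_{0})$. By Definition \ref{defi:SMr}, $\triangle^\beta K^{(A)}$ is then of order $r/2-\beta$, and multiplication by the polynomial $(n-m)^\alpha$ preserves this order because the extra powers of $|n-m|$ are absorbed by the arbitrary off-diagonal decay. Hence $L^{(\alpha,\beta)} \in SM^{(r-2\beta)/2}(\NN_{0})$, and Theorem \ref{theo-intro-main} applied in the reverse direction gives $\tilde H^{(\alpha)}(\tilde Z^{(\beta)}(A))\in\Psi^{r-2\beta}_{\text{iso}}(\RR)$. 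The mapping property (\ref{eq:iso-comm-cond}) then follows from the standard boundedness of isotropic pseudodifferential operators between isotropic Sobolev spaces.

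For the reverse direction, assume (\ref{eq:iso-comm-cond}); we show that $K^{(A)}$ is a symbol matrix of order $r/2$. Testing the bounded operator $T^{(\alpha,\beta)}$ with matrix $L^{(\alpha,\beta)}$ on $\phi_n$, and using $\|\phi_n\|^{2}_{H^t_{\text{iso}}}\sim (1+n)^t$ together with $\|T^{(\alpha,\beta)}\phi_n\|^{2}_{H^s_{\text{iso}}} = \sum_m (1+m)^s |L^{(\alpha,\beta)}_{m,n}|^2$, yields the termwise bound
\[
|L^{(\alpha,\beta)}_{m,n}|^2 \;\lesssim_{\alpha,\beta,s}\; (1+n)^{r+s-2\beta}(1+m)^{-s}
\]
for every $s\in\RR$; dualising (noting that the adjoint maps $H^{-s}_{\text{iso}}\to H^{2\beta-r-s}_{\text{iso}}$) gives the symmetric estimate with $m,n$ swapped. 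On the diagonal $n=m$, take $\alpha=0$ and $s=0$ to obtain $|(\triangle^\beta K^{(A)})(m,m-\beta)|\lesssim (1+m)^{r/2-\beta}$. Off the diagonal $n\neq m$, divide by $|n-m|^\alpha$ with $\alpha=N$ to extract $(1+|m-n|)^{-N}$ decay, and choose $s\geq 0$ if $n\leq m$ (or use the dual bound in the opposite case) so that $(1+n)^{r+s-2\beta}(1+m)^{-s}\lesssim (1+m+n)^{r-2\beta}$. The shift by $\beta$ in the second argument is absorbed into the constants since $\beta$ is fixed. Theorem \ref{theo-intro-main} then yields $A\in\Psi^r_{\text{iso}}(\RR)$.

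The main obstacle is this reverse direction: pointwise estimates on a matrix do not in general follow from $L^2$-boundedness of the associated operator, but the hypothesis that (\ref{eq:iso-comm-cond}) holds simultaneously for all $\alpha,\beta,s$ provides exactly the required flexibility. Arbitrary $\alpha$ produces the off-diagonal decay in $|m-n|$, arbitrary $s$ together with duality produces the correct polynomial growth in $m+n$, and iterating in $\beta$ accesses every order of the discrete difference of $K^{(A)}$.
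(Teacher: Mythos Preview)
Your argument is correct and follows essentially the same route as the paper's: both reduce to Theorem~\ref{theo-intro-main} via the identities linking $\tilde H$ to multiplication by $(n-m)$ on the matrix and $\tilde Z$ to the discrete difference $\triangle$, and both extract pointwise matrix bounds from the mapping hypotheses by exploiting the freedom in $s$. Your closed form $L^{(\alpha,\beta)}_{m,n}=2^\alpha(-1)^\beta(n-m)^\alpha(\triangle^\beta K^{(A)})(m,n-\beta)$ and the device of bounding a single term of $\sum_m(1+m)^s|L_{m,n}|^2$ by the whole sum are mild cosmetic simplifications of the paper's use of $Z^\dagger$ and conjugation by $(1+H)^{s/2}$; note, incidentally, that your appeal to duality is unnecessary, since replacing $s$ by $2\beta-r-s$ in your primal bound already yields the $m\leftrightarrow n$ swapped estimate.
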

This theorem gives an isotropic version of the classical theorem of Beals, which says that if commutators of a linear operator with $x_{j}$ and $\partial_{j}$ have appropriate mapping properties, then it is a (standard) pseudodifferential operator. It is important to note that the above operator $Z$ is an isotropic pseudodifferential operator, but not a differential operator.

In the following, we fix a choice of Fourier transform, defining
\[ \cF f(\xi) = \hat f(\xi) = \int_{\RR^d} e^{-i\xi x} f(x) \ dx. \] 
With this convention, the inverse Fourier transform is \[\cF^{-1} f(x) = \int_{\RR^d} e^{ix \xi} f(\xi)\  \dbar\xi,\] 
where $\dbar \xi := (2\pi)^{-d} d\xi$. 

Additionally, we will also use the standard notation $f(x) \lesssim g(x)$ to denote that there is some $C> 0$ such that $f(x) \leq C g(x)$ for all $x$. When both sides of the equation depend on multiple variables it should be clear from context which ones $C$ depends on. We also write $\cS(\RR)$ for Schwartz functions ($f :\RR\to\RR$ such that for all $\alpha,\beta \in \NN_{0} = \{0,1,2,\dots\}$, we have that $\| x^{\alpha}\partial_{x}^{\beta} f \|_{L^{2}(\RR)} < \infty$). We further let $\cS'(\RR)$ be the space of tempered distributions, the topological dual to the Schwartz space $\cS(\RR)$.

We have compiled various elementary results about symbol matrices in Appendix \ref{appA}, most of which are used in the proof of our main result, Theorem \ref{theo-intro-main}. Appendixes \ref{appB} and \ref{appC} contain proofs of Lemmas which are needed in the proof of the main theorem, but we felt were somewhat extraneous to the exposition, as they both are somewhat messy calculations. Appendix \ref{appC} could potentially be of independent interest, in it we prove that for $A \in \Psi_\text{iso}^{\epsilon_0} (\RR)$ (for $\epsilon_0<1/2$), if $A$ is bounded $L^2 \to L^2$, then the symbol of $A$ is bounded. As such, it is a partial converse to the $L^2$ boundedness of order $0$ pseudodifferential operators. 

\section{Acknowledgements} 

Much of this paper represents a part of my undergraduate honors thesis. I am very grateful to my advisor, Andr\'as Vasy, for teaching me about pseudodifferential operators and microlocal analysis, and for suggesting and then greatly assisting me with this work. I would like to thank Richard Melrose, Michael Ruzhansky, and Michael Taylor for their helpful comments and suggestions. I would also like to thank the referees for their careful reading and comments. Part of this work was completed while supported by a Stanford University VPUE research grant. 

\section{Isotropic Pseudodifferential Operators and Hermite Functions} \label{sec-Prelim}
  Isotropic pseudodifferential operators, also known as the ``Shubin class'' in the literature, are defined  to be pseudodifferential operators with symbols $a(x,\xi) \in C^\infty(\RR^2)$ such that for all $\alpha,\beta \in \NN_0$, there is a $C_{\alpha,\beta} >0$ so that for $x,\xi\in \RR$
 \[ |\partial_x^\alpha\partial_\xi^\beta a(x,\xi)| \leq C_{\alpha,\beta} (1+|x|+ |\xi|)^{r-\alpha-\beta}. \] Here we follow Melrose's development of the class (c.f.\ \cite{Melrose:MAnotes}). Note that these differ from ``regular'' symbols in two ways, namely that we require decay in both $x$ and $\xi$ (regular symbols only require decay in $\xi)$ and we also require $x$ derivatives to improve the decay (for regular symbols the $x$ derivatives are only required to not make the decay worse). We denote the above class of symbols by $S^r_\text{iso}(\mathbb{R})$, and class of operators with these symbols $\Psi_\text{iso}^r(\RR)$, which we call isotropic pseudodifferential operators of order $r$. 
 
 We recall that given an (isotropic) symbol $a(x,\xi)$, we obtain an (isotropic) pseudodifferential operator $A$ by setting
 \[ A u (x) : = \int_{\RR\times \RR} e^{i(x-y)\cdot \xi} a(x,\xi) u(y) dy\dbar \xi,\]
 for Schwartz functions $u$. In, e.g.\ \cite{Melrose:MAnotes}, it is shown that this is a well defined continuous operator on Schwartz functions. Additionally, it can be shown that the composition of two isotropic pseudodifferential operators is again an isotropic pseudodifferential operator, and that the adjoint of an isotropic pseudo-differential operator is again an isotropic pseudo-differential operator of the same order (see, e.g. Melrose \cite{Melrose:MAnotes} Proposition 4.1 and Theorem 4.1).  Furthermore, it can be shown that elliptic elements of $\Psi^r_\text{iso}(\RR^d)$ have two sided parametrices in $\Psi^{-r}_\text{iso}(\RR^d)$. Additionally, because $\Psi_\text{iso}^0(\RR^d)\subset \Psi^0(\RR^d)$, we have $L^2$ boundedness of order 0 isotropic pseudo-differential operators. 

We will need the notion of  an elliptic isotropic pseudodifferential operator of order $r$, which is is an isotropic pseudodifferential operator whose symbol obeys
\[ |a(x,\xi)| \geq C (1+|x|+|\xi|)^{r}\]
for $x,\xi$ outside of some compact set. We further remark that, as in the case of standard symbols, the full symbol is not uniquely determined by the operator, but its image in $S^{r}_{\text{iso}}/S^{r-1}_{\text{iso}}$ is uniquely determined, and we call this image the principal symbol of the operator $A$. 

One property of isotropic calculus that is different from the standard calculus is closure under the Fourier transform in the following sense: defining $\hat A$ by the formula $\widehat { \hat A u} = A \hat u $, it can be shown that $\hat A$ is a isotropic pseudodifferential operator of the same order with principal symbol $\hat a (x,\xi) : = a(\xi,- x)$. 

Finally we discuss the Hermite functions on $\RR^{d}$. If we define the $j$-th creation operator $C^\dagger_{j} = \frac{1}{\sqrt{2}} (-\frac{\partial}{\partial x_{j}} + x_{j})$ , and let $\phi_0 = \frac{1}{\pi^{1/4}} e^{-|x|^2/2} $, then to define the higher Hermite functions (depending on a multi-index $\alpha$), we inductively let $\phi_{\alpha+e_{j}} = \frac{1}{\sqrt{\alpha_{j}+1}} C_{j}^\dagger \phi_\alpha$. These are an orthonormal basis for $L^2(\RR^{d})$ and also span a dense subset of $\cS(\RR^{d})$. Note that $H\phi_\alpha = (1+2|\alpha|) \phi_\alpha$ (where $H = -\sum_{j=1}^{d}\frac{\partial^{2}}{\partial x_{j}^{2}} + |x|^{2}$ is the harmonic oscillator), and $\mathcal{F} \phi_k(\xi) = (-i)^{\alpha} (2\pi)^{d/2} \phi_k(\xi)$, so in particular the Hermite functions are eigenfunctions of both the harmonic oscillator and the Fourier transform. More properties and proofs of these claims can be found in many places, e.g.\ Chapter 6 of \cite{Taylor:PDE2}. Alternatively, the results that we will make use of are described in detail in Chapter 4 of \cite{chod:thesis}. For later use, we record the following (c.f.\ \cite[Lemma 4.8]{chod:thesis})

\begin{lemm}\label{lem:hmt-fct-der-mult}For a multi-index $n \in \ZZ^d_0$ with $n_k>0$ for $1 \leq k \leq d$ we have 
\begin{align}\label{hmt-fct-der} \frac{\partial \phi_n}{\partial x_k}(x) &= -\sqrt \frac{n_k+1}{2} \phi_{n+e_k}(x) + \sqrt\frac {n_k}{ 2} \phi_{n-e_k} (x)\\
\label{hmt-fct-mult} x_k \phi_n(x) &= \sqrt \frac{n_k+1}{2} \phi_{n+e_k}(x) + \sqrt\frac {n_k}{ 2} \phi_{n-e_k} (x). \end{align}
\end{lemm}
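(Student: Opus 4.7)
The plan is to use the standard ladder-operator formalism. Alongside the creation operator $C_k^\dagger = \frac{1}{\sqrt 2}(-\partial_{x_k} + x_k)$, I introduce the annihilation operator $C_k := \frac{1}{\sqrt 2}(\partial_{x_k} + x_k)$, its formal $L^2$-adjoint. A direct computation using $[\partial_{x_k}, x_j] = \delta_{jk}$ gives the canonical commutation relations
\[
[C_j, C_k^\dagger] = \delta_{jk}, \qquad [C_j, C_k] = [C_j^\dagger, C_k^\dagger] = 0,
\]
and algebraic rearrangement yields the inversion formulas
\[
x_k = \tfrac{1}{\sqrt 2}(C_k + C_k^\dagger), \qquad \partial_{x_k} = \tfrac{1}{\sqrt 2}(C_k - C_k^\dagger).
\]
Given these, once I know how $C_k$ and $C_k^\dagger$ act on $\phi_n$, the two identities in the lemma follow by simply adding and subtracting.

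The creation formula $C_k^\dagger \phi_n = \sqrt{n_k+1}\,\phi_{n+e_k}$ is immediate from the inductive definition $\phi_{n+e_k} = \frac{1}{\sqrt{n_k+1}} C_k^\dagger \phi_n$. The real work is establishing the annihilation formula
\[
C_k \phi_n = \sqrt{n_k}\,\phi_{n-e_k},
\]
which I prove by induction on $|n|$. The base case $n=0$ amounts to $(\partial_{x_k} + x_k)e^{-|x|^2/2} = 0$. For the inductive step, given $|n| = N+1$, choose any $j$ with $n_j \geq 1$ and write $\phi_n = \frac{1}{\sqrt{n_j}} C_j^\dagger \phi_{n-e_j}$. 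Then I compute $C_k \phi_n = \frac{1}{\sqrt{n_j}} C_k C_j^\dagger \phi_{n-e_j}$ by commuting $C_k$ past $C_j^\dagger$: when $j\neq k$ the operators commute, and when $j = k$ I use $C_k C_k^\dagger = C_k^\dagger C_k + 1$. Applying the inductive hypothesis to $C_k \phi_{n-e_j}$ in each case and simplifying the arithmetic factors (e.g.\ $\sqrt{n_k(n_k-1+1)} = n_k$ in the $j=k$ case) produces the claimed formula.

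Finally, combining the two ladder identities with $x_k = \frac{1}{\sqrt 2}(C_k + C_k^\dagger)$ and $\partial_{x_k} = \frac{1}{\sqrt 2}(C_k - C_k^\dagger)$ gives
\begin{align*}
x_k \phi_n &= \tfrac{1}{\sqrt 2}\bigl(\sqrt{n_k}\,\phi_{n-e_k} + \sqrt{n_k+1}\,\phi_{n+e_k}\bigr), \\
\partial_{x_k}\phi_n &= \tfrac{1}{\sqrt 2}\bigl(\sqrt{n_k}\,\phi_{n-e_k} - \sqrt{n_k+1}\,\phi_{n+e_k}\bigr),
\end{align*}
matching (\ref{hmt-fct-mult}) and (\ref{hmt-fct-der}). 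The only delicate point is the induction case analysis, but it is bookkeeping rather than a conceptual hurdle; no obstruction arises for indices with $n_k = 0$ since the $\sqrt{n_k}$ factor annihilates the $\phi_{n-e_k}$ term and no negative-index Hermite function is needed.
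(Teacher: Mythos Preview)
Your proof is correct and follows the standard ladder-operator argument. The paper does not actually supply its own proof of this lemma; it merely records the identities and cites \cite[Lemma 4.8]{chod:thesis} and Chapter 6 of \cite{Taylor:PDE2}, so there is no in-paper argument to compare against.
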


 \section{Proof of Theorem \ref{theo-intro-main}}\label{main-theo:sec}
 In this section we prove Theorem \ref{theo-intro-main}, showing that order $r$ isotropic pseudodifferential operators are the same as operators whose associated matrices are symbol matrices of order $r/2$.
% \begin{theo}\label{theo-intro-main}
%A linear operator $A:\cS(\RR) \to \cS'(\RR)$ is an isotropic pseudodifferential operator of order $r$ if and only if the ``matrix of $A$'' \[K^{(A)}:\NN_0\times \NN_0 \to \RR\] defined by $(m,n)\mapsto \langle A\phi_n, \phi_m\rangle_{L^2(\RR)}$ is a order $r/2$ symbol matrix in the sense of Definition \ref{defi:SMr} (where $\phi_n$ is the $n$-th Hermite function). 
%\end{theo}

\begin{proof}[Proof that $A \in \Psi^r_\text{iso}(\RR) \Rightarrow K^{(A)} \in SM^{r/2}(\NN_0)$]

We assume that $A \in \Psi^r_\text{iso}(\RR)$ is given. We will use $L^2$ boundedness of order 0 pseudodifferential operators to show that the matrix of $A$ is of order $r/2$ in $m$ and $n$. Then, an integration by parts argument will show that the matrix is rapidly decreasing off of the diagonal. Finally, we will show that applying the difference operator gives a matrix that is comparable to the matrix of an operator one order lower, which will complete the proof. 

It can be shown (as discussed in the introduction) that if $H = -\frac{\partial^2}{\partial x^2} + x^2$ is the harmonic oscillator then $(1+H)^{s/2} \in \Psi_\text{iso}^s(\RR)$ with principal symbol $(1+|x|^2+|\xi|^2)^{s/2}$. For a proof of this, see \cite{Shubin:PDO}, Theorems II.10.1 and II.11.2. We define 
\begin{equation} \label{eq:conj-H}A_t = (1+H)^{-rt/2} A (1+H)^{-r(1-t)/2}. \end{equation} 
Because $A_t \in \Psi_\text{iso}^0(\RR)$, it is bounded on $L^2(\RR)$. This implies that 
\begin{equation*}
|(K^{(A_t)})_{m,n}| = \left| \bangle{ A_t \phi_n, \phi_m}\right| \leq \Vert A_t\Vert_{\mathcal{L}(L^2(\RR))} < \infty.
\end{equation*} 

By Lemma \ref{mat-mult}, the matrix of a product of two operators is the product of the matrices of the operators. Thus, using the convention that repeated indices are summed, 
\begin{align*}(K^{(A_t)})_{m,n}&= \langle(1+H)^{-rt/2} \phi_k,\phi_m\rangle_{L^2(\TT^d))} (K^{(A)})_{k,j} \langle(1+H)^{-r(1-t)/2} \phi_j,\phi_n\rangle_{L^2(\TT^d))} \\ &= \delta_{k,m} (2+2n)^{-rt/2}(K^{(A)})_{k,j} \delta_{j,n} (2+2m)^{-r(1-t)/2}\\
&= (2+2n)^{-rt/2}(2+2m)^{-r(1-t)/2} (K^{(A)})_{m,n}\\
&= C (1+n)^{-rt/2}(1+m)^{-r(1-t)/2} (K^{(A)})_{m,n}.
\end{align*}
Combining this with the above bounds on $(K^{(A_t)})_{m,n}$ (because it is the matrix of a order 0 pseudodifferential operator), we thus have that 
\[|(K^{(A)})_{m,n} | \lesssim (1+n)^{rt/2}(1+m)^{r(1-t)/2}.\] 
If the order of $A$, $r$, is positive, taking $t=1/2$ gives
\begin{align*}|(K^{(A)})_{m,n}| &\lesssim (1+m)^{r/4}(1+n)^{r/4}\\
&= (1+m + n + mn)^{r/4}\\
&\leq ( 1 + 2m + 2n + 2mn)^{r/4}\\
&\leq ( 1 + 2m + 2n + 2mn+ m^2 + n^2)^{r/4}\\
& = (1+m+n)^{r/2}
\end{align*}
and if the order $r\leq0$, taking $t=0,1$ gives
\begin{align*}|(K^{(A)})_{m,n}| &\lesssim (1+\min(m,n))^{r/2}\\
& \leq (1+2^{-1}(m+n)^{r/2}\\
&\lesssim (1+m+n)^{r/2}.
\end{align*}

In order to show off diagonal decay of $K^{(A)}$, we first show how to simplify the integral expression of $(K^{(A)})_{m,n}$ by using the fact that Hermite functions are eigenfunctions of the Fourier transform:
\begin{align*}
(K^{(A)})_{m,n} & = \bangle {A \phi_n, \phi_m}\\
& = \int e^{i (x-y) \xi} a(x,\xi) \phi_n(y) \phi_m(x) dy\dbar\xi dx\\
& = \int e^{i x \xi} a(x,\xi) \mathcal{F}_{y\to \xi}[\phi_n](\xi)\phi_m(x) \dbar\xi dx\\
& = \int e^{i x \xi} a(x,\xi) [\sqrt{2\pi} (-i)^n \phi_n(\xi) \phi_m(x) \dbar\xi dx\\
& = \sqrt{2\pi} (-i)^n \int e^{i x\xi} a(x,\xi) \phi_n(\xi) \phi_m(x) \dbar\xi dx.
 \end{align*}
 Letting $H_x = x^2 - \frac{\partial^2}{\partial x^2}$ and $H_\xi = \xi^2 - \frac{\partial^2}{\partial\xi^2 }$, we thus have that 
\allowdisplaybreaks
\begin{align*}
 & 2 (m-n)  (K^{(A)})_{m,n} \\
 &= 2(m+1 - n-1)(K^{(A)})_{m,n}\\
& = \sqrt{2\pi}(-i)^n\int_{\RR^d\times\RR^d}e^{ix\cdot \xi} a(x,\xi) (H_x - H_\xi)
\left[\phi_n(\xi) \phi_m(x)\right] \ dx \dbar\xi\\
&= \sqrt{2\pi}(-i)^n\int_{\RR^d\times\RR^d}(H_x - H_\xi) \left[e^{ix\cdot \xi} a(x,\xi)\right]\phi_n(\xi) \phi_m(x) \ dx\dbar\xi.
\end{align*}
Now, we will show that the difference of the harmonic oscillators applied to $e^{ix\xi} a(x,\xi)$ results in $e^{ix\xi} \tilde a(x,\xi)$ where $\tilde a(x,\xi)$ is a new symbol, still of order $r$. 
\begin{align*}
(H_x - H_\xi) & \left[e^{ix\xi} a(x,\xi)\right]\\
& =  \left(x^2  - \xi^2 - \frac{\partial^2}{\partial^2 x} + \frac{\partial^2}{\partial^2\xi} \right) e^{ix\xi} a(x,\xi) \\
& =  \left(x^2  - \xi^2 +\xi^2 - x^2\right) e^{ix\xi} a(x,\xi)\\  & \qquad+ e^{ix\xi} \left[ - 2 i\xi  \frac{\partial}{\partial x} + 2 i x \frac{\partial}{\partial\xi} - \frac{\partial^2}{\partial^2 x} + \frac{\partial^2}{\partial\xi^2}  \right] a(x,\xi)\\
&=  e^{ix\xi} \left[ -2i\xi  \frac{\partial}{\partial x} +2 i x \frac{\partial}{\partial\xi} - \frac{\partial^2}{\partial^2 x} + \frac{\partial^2}{\partial^2\xi}  \right] a(x,\xi)\\
&= e^{ix\xi} \tilde a(x,\xi).
\end{align*}
Applying this repeatedly, we see that for any $N\geq 0$, multiplying $K_{m,n}^{(A)}$ by $(m-n)^N$ is the same as considering $(K^{(\tilde A)})_{m,n}$ for some $\tilde A \in \Psi_\text{iso}^r(\RR)$, and thus, repeating the boundedness argument given above for $\tilde A$, we thus have
 \begin{equation}\label{iso-no-diff-decay}|(K^{(A)})_{m,n}| \leq C_N (1+ |n-m|)^{-N}(1+m+n)^{r/2}.\end{equation}
 
It remains to be shown that the difference operator lowers the degree of the matrix. For the difference operator, notice that (letting $C : = \frac{1}{\sqrt{2}} (x+\partial_x)$ be the annihilation operator and $C^\dagger = \frac{1}{\sqrt{2}}(x- \partial_x)$ the creation operator)
\allowdisplaybreaks
\begin{align*}(\triangle K^{(A)})_{m,n} = & \langle A \phi_{n+1} ,\phi_{m+1} \rangle - \langle A \phi_n , \phi_m\rangle\\
= & \frac{1}{\sqrt{(m+1)(n+1)}}\langle A C^\dagger \phi_n , C^\dagger \phi_m\rangle - \langle A \phi_n ,\phi_m\rangle\\
=& \frac{1}{\sqrt{(m+1)(n+1)}}\langle C A C^\dagger \phi_n , \phi_m\rangle - \langle A \phi_n ,\phi_m\rangle\\
=& \frac{1}{\sqrt{(m+1)(n+1)}}\langle (A C + [C,A]) C^\dagger \phi_n , \phi_m\rangle - \langle A \phi_n ,\phi_m\rangle\\
=& \left(\frac{n + 1}{\sqrt{(m+1)(n+1)}} - 1\right) \langle A \phi_n ,\phi_m\rangle \\
&\quad+ \frac{1}{\sqrt{(m+1)(n+1)}}\langle ([C,A]) C^\dagger \phi_n , \phi_m\rangle\\
=& \left(\sqrt{\frac{n+1}{m+1}} - 1\right) \langle A \phi_n ,\phi_m\rangle \\
&\quad+ \frac{1}{\sqrt{(m+1)(n+1)}}\langle ([C,A]) C^\dagger \phi_n , \phi_m\rangle\\
=& \left(\frac{\sqrt{n+1}-\sqrt{m+1}}{\sqrt{m+1}}\right) \langle A \phi_n ,\phi_m\rangle\\&\quad + \frac{1}{\sqrt{(m+1)(n+1)}}\langle ([C,A]) C^\dagger \phi_n , \phi_m\rangle\\
=& \left(\frac{n-m}{\sqrt{m+1} (\sqrt{n+1} + \sqrt{m+1})}\right) \langle A \phi_n ,\phi_m\rangle \\&\quad+ \frac{1}{\sqrt{(m+1)(n+1)}}\langle ([C,A]) C^\dagger \phi_n , \phi_m\rangle.
 \end{align*}
 Because \[(\sqrt{m+1} + \sqrt{n+1})^2 = m+n+2 + 2\sqrt{(m+1)(n+1)} \geq (1+|m|+|n|)\] and \[ (m+1)(n+1) = (1 + m + n + mn) \gtrsim (1+m+n)^2,\] combining these inequalities with \eqref{iso-no-diff-decay} (the boundedness argument applies for $[C,A]C^\dagger$, which is a degree $r+1$ isotropic operator) we see that the difference operator will lower the degree in $m +n$ by $1$ and still preserve the off diagonal boundedness. Repeating this argument inductively for higher powers of the difference operator, we see that $K^{(A)}$ is a symbol matrix of order $r/2$. 

 \end{proof}
 
 \begin{proof}[Proof that $K^{(A)} \in SM^{r/2}(\NN_0)\Rightarrow A \in \Psi^r_\text{iso}(\RR)$]
 
We assume that $A: \cS(\RR) \to \cS'(\RR)$ is given, with $K^{(A)} \in SM^{r/2}(\NN_0)$. By multiplying by appropriate powers of the harmonic oscillator, we will show that we get a Hilbert-Schmidt operator. Using this, we will show that we have weighted $L^2$ bounds on the derivatives of the symbol of $A$. This will give weighted $L^\infty$ bounds on derivatives of the symbol, which will show that $A \in \Psi^{r+ \epsilon}_\text{iso}(\RR)$ for all $\epsilon>0$. Finally a sort of converse to $L^2$ boundedness of order 0 operators will show that we can take $\epsilon = 0$, as desired. 

Let $K(x,y)$ be the Schwartz kernel of $A$. We define a distribution
 \[a(x,\xi) = - \int_{\RR} e^{i(y-x)\cdot \xi} K(x,y)\ \dbar y.\] This definition gives that (where in the sequel we will follow the standard practice of writing an integral where we actually mean pairing with a distribution) 
  \[ K_{m,n}^{(A)} = \sqrt{2\pi}(-i)^n\int_{\RR\times\RR}e^{ix\cdot \xi} a(x,\xi) \phi_n(\xi) \phi_m(x)  \ dx\dbar\xi.\] 
  
  However, if we replace $a(x,\xi)$ with $\frac{\partial a}{\partial x}(x,\xi)$ we see from the following calculation that this results in a symbol matrix of order $(r-1)/2$ (we will use Lemma \ref{lem:hmt-fct-der-mult}, which gives a formula for derivatives of Hermite functions as well as the product of Hermite functions with a linear function):
  
  \allowdisplaybreaks
  \begin{align*}
&\sqrt{2\pi}  (-i)^n\int_{\RR\times\RR} e^{ix\cdot \xi} \frac{\partial a}{\partial x}(x,\xi) \phi_n(\xi) \phi_m(x) d\xi dx \\
&  = \sqrt{2\pi} (-i)^{n+1}\int_{\RR\times\RR} \left(\xi + i \frac{\partial}{\partial x} \right) \left [e^{ix\cdot \xi} a(x,\xi)\right]  \phi_n(\xi) \phi_m(x) d\xi dx\\
&  = \sqrt{2\pi} (-i)^{n+1}\int_{\RR\times\RR} e^{ix\cdot \xi} a(x,\xi) \left(\xi - i \frac{\partial}{\partial x} \right)  \phi_n(\xi) \phi_m(x) d\xi dx\\
&  = \sqrt{2\pi} (-i)^{n+1}\int_{\RR\times\RR} e^{ix\cdot \xi} a(x,\xi) \left( \sqrt \frac{n+1}{2} \phi_{n+1}(\xi) \phi_m(x) \right. \\
 & \qquad + \sqrt \frac n2 \phi_{n-1}(\xi) \phi_m(x) + i \sqrt \frac {m+1}{2} \phi_n(\xi) \phi_{m+1}(x)\\
 &\qquad  \left. - i \sqrt\frac m2 \phi_n(\xi) \phi_{m-1} (x) \right) d\xi dx\\
 &  = \sqrt\frac{n+1}{2} K_{m,n+1}^{(A)} - \sqrt\frac n2 K_{m,n-1}^{(A)} + \sqrt\frac{m+1}{2} K_{m+1,n}^{(A)} - \sqrt\frac m2 K_{m-1,n}^{(A)}\\
 &=  \sqrt\frac{n+1}{2} K_{m,n+1}^{(A)} - \sqrt\frac{n+1}{2} K_{m-1,n}^{(A)} +  \sqrt\frac{n+1}{2} K_{m-1,n}^{(A)} \\
 & \qquad - \sqrt\frac m2 K_{m-1,n}^{(A)} + \sqrt\frac n2 K_{m+1,n}^{(A)}  - \sqrt\frac n2 K_{m,n-1}^{(A)} \\
 & \qquad + \sqrt\frac{m+1}{2} K_{m+1,n}^{(A)} -\sqrt\frac n2 K_{m+1,n}^{(A)} \\
 & = \sqrt \frac{n+1}{2} \findif{} K_{m-1,n}^{(A)} + \left( \sqrt\frac{n+1}{2} - \sqrt\frac m2 \right) K_{m-1,n}^{(A)}\\
 & \qquad + \sqrt\frac n2 \findif{} K_{m,n-1}^{(A)} + \left( \sqrt\frac{m+1}{2} - \sqrt\frac n2 \right) K_{m+1,n}^{(A)}.
  \end{align*} 
It is clear that the order of symbol matrices is multiplicative and that the difference operator applied to a symbol matrix is a symbol matrix of one order lower, which shows that the terms with the difference operator are symbol matrices of order $(r-1)/2$.  Lemma \ref{sym-mat-mult} (which deals with multiplying a symbol matrix by a function which is not necessarily decaying off the diagonal, but does have appropriate diagonal decay) shows that the remaining terms are also symbol matrices of order $(r-1)/2$, so thhe sum of all four terms is also a symbol matrix of order $(r-1)/2$. A similar calculation holds for $\xi$ derivatives. 
  
  We let (for $\alpha,\beta\in \NN_0$) $A^{\alpha,\beta}$ be the operator with Schwartz kernel 
  \[\label{eq:box-long-comp} K^{\alpha,\beta} (x,y) = \int_{\RR} e^{i(x-y)\cdot \xi} \frac{\partial^{\alpha+\beta} a}{\partial x^\alpha \partial \xi^\beta}(x,\xi) \ \dbar\xi.\] 
  From our above results, we have $A^{\alpha,\beta} : \cS(\RR) \to \cS'(\RR)$ and its matrix is a symbol matrix of order $(r-\alpha-\beta)/2$. Now, for $s\in \RR$, we define $B^{\alpha,\beta}_s = (1+H)^{-s/2} A^{\alpha,\beta}$. By Lemma \ref{sym-mat-l2-memb} (which says that order $r<-1/2$ symbol matrices are $\ell^2$ summable, implying that the corresponding operator has an $L^2$ summable kernel) if $r-s-\alpha-\beta < -1$, then $B_s^{\alpha,\beta}$ is Hilbert-Schmidt because its kernel is square summable (see e.g. Theorem VI.23 in Reed-Simon \cite{ReedSimon:FA}). Thus, for the kernel of $B_{s}^{\alpha,\beta}$ we have $K_s^{\alpha,\beta} (x,y)\in L^2(\RR\times \RR)$. This implies that for $r(x,\xi)$ the symbol of $(1+H)^{s/2}$, 
  \begin{align*}A^{\alpha,\beta} f & = (1+H)^{s/2} \left(\int_{\RR} K_s^{\alpha,\beta}(y,z) f(z) \ dz\right)\\
  & = \int e^{i(x-y)\xi} r(x,\xi) K^{\alpha,\beta}_s(y,z) f(z)  \ dzdy\dbar\xi . \end{align*}
 Taking the Fourier transform \[ \frac{\partial^{\alpha+\beta} a}{\partial x^\alpha \partial \xi^\beta}(x,\xi)  \in (1+|x|^2 + |\xi|^2)^{-s/2} L^2(\RR\times \RR)\] for $r-s-\alpha-\beta<-1$, because the Fourier transform of $K^{\alpha,\beta}_s$ is in $L^2(\RR\times \RR)$, and the principal symbol of $(1+H)^{-s/2}$ is $(1+x^2  + \xi^2)^{-s/2}$ and the lower order terms will only improve the integrability. By Sobolev embedding, this implies that actually $a(x,\xi) \in C^\infty(\RR)$. 
 
We have weighted $L^2(\RR\times\RR)$ bounds on the derivatives of $a$, but to show that $a$ is a symbol, we would like weighted $L^\infty(\RR\times\RR)$ bounds on the derivatives. This is a standard argument, we include it in Appendix \ref{appB}. More precisely, Lemma \ref{L2-to-Linfty:theo} shows that for $r'$ with $r-r'-\alpha-\beta<0$
 \[ \frac{\partial^{\alpha+\beta} a}{\partial x^\alpha \partial \xi^\beta}(x,\xi)  \in (1+|x|^2 + |\xi|^2)^{r'/2} L^\infty(\RR\times \RR).\] Thus, we see that $A \in \Psi^{r'}_\text{iso} (\RR)$ for $r'>r$, and $B_s^{\alpha,\beta} \in \Psi^{r'-s - \alpha-\beta}_\text{iso}(\RR)$ for $r'>r$. Thus, to complete the proof, all that we must show is that $A \in \Psi_\text{iso}^r (\RR)$. Notice that this implies that for $ r -s = \alpha+\beta$, $B_{s}^{\alpha,\beta} \in \Psi_\text{iso}^{\epsilon_0} (\RR) $ for any $\epsilon_0>0$. However, since the symbol matrix of $B_{s}^{\alpha,\beta}$ is order $0$, by Lemma \ref{lem:ord0-bd} $B^{\alpha,\beta}_{s}$ is bounded on $L^{2}(\RR)$. Thus, we see that for $r-s =\alpha+\beta$, $B_s^{\alpha,\beta}$ is bounded on $L^2(\RR)$. Using the following lemma, we can thus conclude that the symbol of $B_s^{\alpha,\beta}$ is bounded, which then implies that $A \in \Psi_\text{iso}^r(\RR)$ because that will give the necessary bounds on each derivative. 
 \begin{lemm}
 For $Q \in \Psi_\text{iso}^{\epsilon_0} (\RR)$ for some $\epsilon_0<1/2$, if $Q$ is bounded on $L^2(\RR)$, then the symbol of $Q$, $\sigma_L(Q) = q(x,\xi)$ is bounded, i.e. \[ \sup_{(x,\xi) \in \RR^2} |q(x,\xi) | < \infty\]
 \end{lemm}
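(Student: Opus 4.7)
Plan. The approach is the method of Gaussian coherent states: I test $Q$ against a normalized wave packet concentrated at an arbitrary phase-space point $(x_0,\xi_0)$ and show that the resulting matrix element approximates $q(x_0,\xi_0)$ up to a uniformly bounded error, so that the hypothesized $L^2$ bound on $Q$ transfers to an $L^\infty$ bound on $q$.

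For each $(x_0,\xi_0)\in\RR^2$ set
$$u_{x_0,\xi_0}(y) := \pi^{-1/4}\, e^{-(y-x_0)^2/2}\, e^{i\xi_0(y-x_0)},$$
which has $\|u_{x_0,\xi_0}\|_{L^2(\RR)} = 1$, and define $F(x_0,\xi_0) := \bangle{Q u_{x_0,\xi_0},\, u_{x_0,\xi_0}}_{L^2(\RR)}$. The hypothesis gives $|F(x_0,\xi_0)| \leq \|Q\|_{\cL(L^2(\RR))}$ uniformly in $(x_0,\xi_0)$. I then insert the defining oscillatory integral $Qv(x) = \int e^{i(x-y)\xi} q(x,\xi) v(y)\,dy\,\dbar\xi$ into $F$ and perform the translations $X = x - x_0$, $Y = y - x_0$, $\Xi = \xi - \xi_0$; the phases combine as $e^{i(x-y)\xi + i\xi_0(y-x)} = e^{i(X-Y)\Xi}$, and integrating the Gaussian in $Y$ against $e^{-iY\Xi}$ gives
$$F(x_0,\xi_0) = c\int_{\RR^2} e^{iX\Xi}\, q(x_0+X,\,\xi_0+\Xi)\, e^{-(X^2+\Xi^2)/2}\, dX\,d\Xi,$$
for an explicit constant $c$ chosen so that $c\int e^{iX\Xi - (X^2+\Xi^2)/2}\,dX\,d\Xi = 1$.

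Taylor expanding $q$ at $(x_0,\xi_0)$ with integral remainder of first order splits $F = q(x_0,\xi_0) + R(x_0,\xi_0)$, where $R$ is the weighted integral of $X\int_0^1 q_x(x_0+tX,\xi_0+t\Xi)\,dt + \Xi\int_0^1 q_\xi(x_0+tX,\xi_0+t\Xi)\,dt$ against $c\,e^{iX\Xi}e^{-(X^2+\Xi^2)/2}$. The isotropic symbol estimates give $|q_x|,|q_\xi|\leq C(1+|x|+|\xi|)^{\epsilon_0-1}$. On the portion of the integration domain where $|X|,|\Xi|$ are of unit size (which carries almost all the Gaussian mass), the shift leaves $1+|x_0+tX|+|\xi_0+t\Xi|$ comparable to $1+|x_0|+|\xi_0|$ when $|x_0|+|\xi_0|$ is large, while the Gaussian tail absorbs the contribution from the region where $|X|,|\Xi|$ are comparable to or larger than $|x_0|+|\xi_0|$. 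This yields $|R(x_0,\xi_0)| \lesssim (1+|x_0|+|\xi_0|)^{\epsilon_0-1}$, which is uniformly bounded (in fact decaying) since $\epsilon_0 < 1/2$, and on a compact neighborhood of the origin $q$ is smooth hence automatically bounded. Combining,
$$|q(x_0,\xi_0)| \leq |F(x_0,\xi_0)| + |R(x_0,\xi_0)| \leq \|Q\|_{\cL(L^2(\RR))} + C$$
uniformly in $(x_0,\xi_0)$, as required.

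The main technical point is controlling $R$ uniformly: verifying the near-diagonal comparability of the weight $(1+|x|+|\xi|)^{\epsilon_0-1}$ on phase-space cells of unit scale, and separately dispatching the Gaussian tail where the translation is no longer comparable. The hypothesis $\epsilon_0<1/2$ enters precisely at the final bound on $R$, guaranteeing that the first-order Taylor remainder is bounded and that the restriction to compact sets contributes only a constant independent of $(x_0,\xi_0)$.
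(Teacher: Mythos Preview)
Your coherent-state argument is correct and takes a genuinely different route from the paper's proof. The paper tests $Q^{*}Q$ against functions of the form $e^{iy^{2}v}\varphi(y-z_{0})$ with $\varphi$ a compactly supported bump and $v\in[-1,1]$; after a change of variables and a Taylor expansion in the fiber variable, the leading term is $\int p(x+z_{0},2vx+2vz_{0})\varphi(x)^{2}\,dx$ with $p=\sigma_{L}(Q^{*}Q)$, and the remainder is shown to decay like $(1+|z_{0}|)^{2\epsilon_{0}-1}$. This only controls $p$ along the cone $|x|\geq 2|\xi|$ (parametrized by $(z_{0},2vz_{0})$), so the paper invokes the Fourier symmetry of the isotropic calculus to cover the complementary region, and then takes a square root to pass from $p$ to $q$. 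By contrast, your Gaussian wave packets $u_{x_{0},\xi_{0}}$ are centered at an arbitrary phase-space point, so you recover $q(x_{0},\xi_{0})$ directly, with no need for $Q^{*}Q$, the Fourier conjugation trick, or the square root.

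Two remarks. First, your identity $F(x_{0},\xi_{0}) = c\int e^{iX\Xi}\,q(x_{0}+X,\xi_{0}+\Xi)\,e^{-(X^{2}+\Xi^{2})/2}\,dX\,d\Xi$ is best justified by writing $Qu(x) = \int e^{ix\xi}q(x,\xi)\hat u(\xi)\,\dbar\xi$ first (here $\hat u_{x_{0},\xi_{0}}$ is again a Gaussian, so everything is absolutely convergent) and only then pairing against $\overline{u}$; the triple integral you wrote is not absolutely convergent in $\Xi$ before the $Y$-integration, so a word about the order of integration would tighten the argument. Second, your claim that $\epsilon_{0}<1/2$ ``enters precisely at the final bound on $R$'' undersells your method: since $|q_{x}|,|q_{\xi}|\leq C(1+|x|+|\xi|)^{\epsilon_{0}-1}\leq C$ whenever $\epsilon_{0}<1$, the crude bound $|R|\leq |c|\,C\int(|X|+|\Xi|)e^{-(X^{2}+\Xi^{2})/2}\,dX\,d\Xi$ is already uniform in $(x_{0},\xi_{0})$. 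Thus your argument actually proves the lemma under the weaker hypothesis $\epsilon_{0}<1$; the paper's threshold $\epsilon_{0}<1/2$ is an artifact of passing through $Q^{*}Q$, whose symbol has order $2\epsilon_{0}$.
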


The proof of this is a calculation based on the idea of letting a pseudodifferential operator act on an exponent to gain control of the symbol. We give it in Appendix \ref{appC}.
 \end{proof}

This theorem does not hold as stated in $\RR^d$ for $d\geq2$. For example, consider $C^\dagger_1$, the creation operator in the first coordinate. Recall that \[ C^\dagger_1 \phi_n = \sqrt{n_1+1} \phi_{n+e_1} \] Thus \[ (K^{(C^\dagger_1)})_{m,n} = \langle C_1^\dagger \phi_n, \phi_m\rangle = \sqrt{n_1+1}\delta_{n+e_1,m} \] This has the appropriate decay; away from the diagonal it is zero, and for $n+e_1=m$ it is order $1/2$ in $|n|$, as in the $d=1$ case. However, taking the difference operator in the first coordinate gives \begin{align*}(\findif{e_1}K^{(C^\dagger_1)})_{m,n} &= (\sqrt{n_1+3} - \sqrt{n_1+1})\delta_{n+e_1,m}\\
& = \frac{2}{\sqrt{n_1+3} + \sqrt{n_1+1}}\delta_{n+e_1,m}\end{align*}
This is order $-1/2$ in $n_1$, but merely bounded for e.g. $n_2$, so in general the order in $|n|$ is only $0$. Furthermore, taking further difference operators in the first coordinate do not change the order in $|n|$ at all. 

It is possible to prove an analogous theorem for dimension $d\geq2$ by using arguments similar to the above proof. 
\begin{theo}\label{theo:iso-high-dim}
For $A$ a continuous linear operator $\cS(\RR^{d})\to \cS'(\RR^{d})$, $A \in \Psi^r_\text{iso}(\RR^d)$ if and only if $(K^{(A)})_{m,n}$ has the following property for all multi-indexes $\alpha,\beta \in \NN_0^d$
\begin{equation}\label{eq:iso-gen-dim} | (\square^{\alpha,\beta} K^{(A)})_{m,n}|  \leq (1+|m|+|n|)^{(r-|\alpha| - |\beta|)/2}  \end{equation} where we define $(\square_{x_k} K)_{nm} $ by 
\begin{equation*}  \sqrt\frac{n_k+1}{2} K_{n+e_k,m}  - \sqrt\frac{n_k}{2}K_{n-e_k,m}+ \sqrt \frac{m_k+1}{2} K_{n,m+e_k} -  \sqrt \frac{ m_k}{2} K_{n,m-e_k}\end{equation*}
 and $(\square_{\xi_k} K)_{nm} $ by 
 \begin{equation*}  \sqrt \frac{m_k+1}{2} K_{n,m+e_k} + \sqrt \frac{m_k}{2} K_{n,m-e_k} - \sqrt\frac{n_k+1}{2} K_{n+e_k,m} - \sqrt\frac{n_k}{2} K_{n-e_k,m} \end{equation*} 
and let $\square^{\alpha,\beta} = \left(\square_{x_1}\right)^{\alpha_1}  \cdots  \left(\square_{x_d}\right)^{\alpha_d}  \left(\square_{\xi_1}\right)^{\beta_1}  \cdots  \left(\square_{\xi_d}\right)^{\beta_d} $. 
\end{theo}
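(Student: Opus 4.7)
The plan is to imitate the proof of Theorem \ref{theo-intro-main}, with the $\square$-operators playing the role that the diagonal difference operator $\triangle$ played in one dimension. The key algebraic point I would first establish is that, in the left quantization, $\square_{x_k}$ corresponds to applying $\partial_{x_k}$ to the symbol and $\square_{\xi_k}$ corresponds to applying $-i\partial_{\xi_k}$. Concretely, after expressing
\[K_{m,n}^{(A)} = (-i)^{|n|}(2\pi)^{-d/2}\int_{\RR^d\times\RR^d} e^{ix\cdot\xi} a(x,\xi)\phi_m(x)\phi_n(\xi)\, dx\, d\xi\]
as in the 1D proof, I would substitute the shifted values of $K$ appearing in the definition of $\square_{x_k}$, use the Hermite identities from Lemma \ref{lem:hmt-fct-der-mult}, and, after tracking the factors $(-i)^{|n|\pm 1}$, recognise the result as the matrix of the operator whose symbol is $\partial_{x_k} a$; similarly for $\square_{\xi_k}$. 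Iterating, $\square^{\alpha,\beta}K^{(A)}$ becomes (up to a unimodular constant) the matrix of the operator with symbol $\partial_x^\alpha\partial_\xi^\beta a$, which lies in $\Psi^{r-|\alpha|-|\beta|}_{\text{iso}}(\RR^d)$.

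For the forward direction, having this algebraic identity in hand, it suffices to prove the basic pointwise bound $|K_{m,n}^{(A')}|\lesssim (1+|m|+|n|)^{r'/2}$ for any $A'\in\Psi^{r'}_\text{iso}(\RR^d)$. I would obtain this exactly as in the 1D proof: conjugate $A'$ with $(1+H)^{-r't/2}$ on one side and $(1+H)^{-r'(1-t)/2}$ on the other, use $H\phi_n=(2|n|+d)\phi_n$, the $L^2$-boundedness of order-zero isotropic operators, and Lemma \ref{mat-mult}, and optimise over $t\in\{0,1/2,1\}$. Applying this estimate to the operator with symbol $\partial_x^\alpha\partial_\xi^\beta a$ then gives \eqref{eq:iso-gen-dim}.

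For the reverse direction, I would define $a(x,\xi):=-\int_{\RR^d} e^{i(y-x)\cdot\xi}K(x,y)\,\dbar y$ from the Schwartz kernel $K$ of $A$ and follow the 1D strategy. The algebraic identity above, read in reverse, identifies $\square^{\alpha,\beta}K^{(A)}$ with the matrix of the operator $A^{\alpha,\beta}$ whose symbol is $\partial_x^\alpha\partial_\xi^\beta a$, so that $(1+H)^{-s/2}A^{\alpha,\beta}$ has matrix $(1+2|m|+d)^{-s/2}(\square^{\alpha,\beta}K^{(A)})_{m,n}$. For $s$ large enough (depending on $|\alpha|+|\beta|$), this matrix is $\ell^2$ on $\NN_0^d\times \NN_0^d$, so the corresponding operator is Hilbert--Schmidt; Fourier transforming its kernel yields weighted $L^2(\RR^{2d})$ bounds on $\partial_x^\alpha\partial_\xi^\beta a$. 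A $d$-dimensional version of Lemma \ref{L2-to-Linfty:theo} then upgrades these to weighted $L^\infty$ bounds at the cost of an $\epsilon>0$, placing $A$ in $\Psi^{r+\epsilon}_{\text{iso}}(\RR^d)$; the $\epsilon$ is removed by the converse-to-$L^2$-boundedness lemma from Appendix \ref{appC}, applied in $\RR^d$ to $(1+H)^{(|\alpha|+|\beta|-r)/2}A^{\alpha,\beta}$, whose symbol matrix is bounded by hypothesis.

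The main obstacle will be the Hilbert--Schmidt step of the reverse direction. In one dimension, the rapid off-diagonal decay built into the definition of $SM^r(\NN_0)$ made Lemma \ref{sym-mat-l2-memb} automatic, but here the hypothesis \eqref{eq:iso-gen-dim} supplies no off-diagonal decay at all. Instead, one must exploit the freedom to apply $\square^{\alpha,\beta}$ arbitrarily many times: the weighted sum
\[\sum_{m,n\in\NN_0^d}(1+|m|)^{-s}(1+|m|+|n|)^{r-|\alpha|-|\beta|}\]
converges by a brute force count of lattice points once $|\alpha|+|\beta|+s-r$ exceeds $2d$, since $\#\{m : |m|=k\}\sim k^{d-1}$. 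This direct summation replaces Lemma \ref{sym-mat-l2-memb} from the 1D proof, and is the only place where the argument departs substantively from the proof of Theorem \ref{theo-intro-main}.
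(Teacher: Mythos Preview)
Your forward direction and the overall architecture of the reverse direction match the paper. The gap is in the Hilbert--Schmidt step. The sum you write down,
\[
\sum_{m,n\in\NN_0^d}(1+|m|)^{-s}(1+|m|+|n|)^{r-|\alpha|-|\beta|},
\]
does \emph{not} converge merely because $|\alpha|+|\beta|+s-r>2d$. The inner sum over $n$ (with $m$ fixed) is $\sum_{n}(1+|m|+|n|)^{-(|\alpha|+|\beta|-r)}$, and since $\#\{n:|n|=j\}\sim j^{d-1}$ this diverges unless $|\alpha|+|\beta|-r>d$; no choice of $s$ helps, because $s$ does not appear in the $n$-sum. So for small $\alpha,\beta$ (say $\alpha=\beta=0$ with $r\ge -d$) the one-sided operator $(1+H)^{-s/2}A^{\alpha,\beta}$ is never forced to be Hilbert--Schmidt by your estimate, and the weighted $L^2$ bounds you need on low-order derivatives of $a$ are missing. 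Even where your sum does converge, the resulting weight is off by $d$ from what is needed to land in $\Psi^{r+\epsilon}_{\text{iso}}$, so the Appendix~\ref{appC} lemma (which demands order $<1/2$) would not apply at the end.

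The paper closes this gap by first extracting off-diagonal decay from the $\square$-hypothesis itself, via the identity
\[
(m_k-n_k)K_{n,m}=\sqrt{\tfrac{m_k}{2}}\,(\square_{x_k}+\square_{\xi_k})K_{n,m-e_k}-\sqrt{\tfrac{n_k}{2}}\,(\square_{x_k}-\square_{\xi_k})K_{n-e_k,m}.
\]
Iterating shows that any matrix satisfying \eqref{eq:iso-gen-dim} is rapidly decreasing in $|m-n|$, with the same order $(r-|\alpha|-|\beta|)/2$ preserved. With that decay in hand, the $\ell^2$-summability for order $<-d/2$ follows exactly as in Lemma~\ref{sym-mat-l2-memb}, and the rest of your reverse-direction outline then goes through as in one dimension.
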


The $\square^{\alpha,\beta}$ operator is really the matrix analogue of applying $\partial_x^\alpha\partial_\xi^\beta$ to the symbol of $A$, as we illustrate in the case of $(\alpha,\beta) = (e_{k},0)$:
\allowdisplaybreaks\begin{align*}
& (\square_{x_k} K)_{nm} \\
& =  \sqrt\frac{n_k+1}{2} K_{n+e_k,m}  - \sqrt\frac{n_k}{2}K_{n-e_k,m}+ \sqrt \frac{m_k+1}{2} K_{n,m+e_k} -  \sqrt \frac{ m_k}{2} K_{n,m-e_k}\\
 &=(2\pi)^{d/2}(-i)^{n+1} \int_{\RR^d}\int_{\RR^d} e^{i x\cdot \xi} a(x,\xi) \left( \sqrt\frac{n_k+1}{2} \phi_{n+e_k} \phi_m 
\right. \\ &\ \ \ \  \left. 
 + \sqrt\frac{n_k}{2}\phi_{n-e_k} \phi_m + i \sqrt \frac{m_k+1}{2} \phi_n\phi_{m+e_k} - i\sqrt \frac{ m_k}{2} \phi_n \phi_{m-e_k}\right)d\xi dx\\
 &=(2\pi)^{d/2}(-i)^{n+1} \int_{\RR^d}\int_{\RR^d} e^{i x\cdot \xi} a(x,\xi)\left( \xi_k-  i \frac{\partial}{\partial x_k}\right) \phi_n(\xi) \phi_m(x)d\xi dx\\
  &=(2\pi)^{d/2}(-i)^{n+1} \int_{\RR^d}\int_{\RR^d} \left( \xi_k + i \frac{\partial}{\partial x_k}\right) e^{i x\cdot \xi} a(x,\xi) \phi_n(\xi) \phi_m(x)d\xi dx\\
 &= (2\pi)^{d/2}(-i)^{n+2} \int_{\RR^d}\int_{\RR^d}  e^{i x\cdot \xi}\frac{\partial a}{\partial x_k}(x,\xi) \phi_n(\xi) \phi_m(x)d\xi dx
\end{align*}

The proof of Theorem \ref{theo:iso-high-dim} follows the one dimensional case almost identically, except that the above calculation replaces the difference operator calculations. For example, to show that for $A \in \Psi^{r}_{\text{iso}}(\RR^{d})$, equation \eqref{eq:iso-gen-dim} is satisfied, the argument for $(\alpha,\beta) = 0$ follows in exactly the same way from $L^{2}$ boundedness of zeroth order isotropic pseudodifferential operators. Then, applying the box operator gives a matrix which is the matrix of an isotropic pseudodifferential operator with the appropriate derivatives on the symbol, so again we have the desired bounds. Conversely, it is possible to show (as we discuss below) that matrices obeying \eqref{eq:iso-gen-dim} with $r < -d/2$ are square summable, and thus correspond to Hilbert-Schmidt operators. Thus, we can establish weighted bounds on the distribution that we would like to show is the symbol (exactly as in the one dimensional case). Finally, derivatives of the ``symbol'' correspond to applying $\square^{\alpha,\beta}$ to the matrix, and we thus have weighted bounds on the derivatives of the symbol, allowing us to conclude as in one dimension.

The conditions of \eqref{eq:iso-gen-dim} are somewhat unsatisfying, as they are considerably more complex than the simple difference operator results in the one dimensional case. For example, it can be shown that infinite matrices obeying \eqref{eq:iso-gen-dim} are rapidly decreasing off of the diagonal, which is not obvious from the condition. To see this, consider the following identity
\[(m_k - n_k)K_{nm} = \sqrt \frac {m_k}{2} (\square_{x_k} + \square_{\xi_k}) K_{n,m-e_k} - \sqrt\frac{n_k}{2} (\square_{x_k} - \square_{\xi_k}) K_{n-e_k,m}. \] 
It is not hard to see that the right hand side still obeys \eqref{eq:iso-gen-dim} with the same $r$, which shows that $K_{nm}$ is rapidly decaying off of the diagonal (in particular, this shows that condition \eqref{eq:iso-gen-dim} with $r<-d/2$ implies that the matrix is square summable, as promised above). 

It is likely that the issues relating to the difference operator in higher dimensions is a consequence of the high multiplicity of the harmonic oscillator eigenspaces. It seems possible that a better result could be obtained through a ``rearranging'' of the eigenspaces. That is, the difference operator compares the operator's action on $\phi_n$ and $\phi_{n+e_k}$, elements of the $|n|+d$ and the $|n|+d+1$ eigenspaces, but there is no reason that these are the proper elements from these two eigenspaces to compare. However, we have been unable to find a satisfactory manner in which to compare the two eigenspaces in order to obtain a simpler condition on the matrix.

 \section{An Isotropic Beals Theorem} 
 We begin by recalling the classical theorem of Beals characterizing (standard) pseudodifferential operators on $\RR^{d}$. For an operator $A: \cS(\RR^{d}) \to \cS'(\RR^{d})$, we define $L_{k} (A) := [x_{k},A]$ (where $[B,C] := BC-CB$ is the commutator) as well as $R_{k}(A) := [ \partial_{k}, A]$. Theorem 1.4 in \cite{Beals:CommCharPSIDO} gives
   
 \begin{theo}[Beals commutator characterization] A continuous linear map \[A: \cS(\RR^{d}) \to \cS'(\RR^{n})\] is a pseudodifferential operator of order $r$, i.e.\ $A\in \Psi^{r}(\RR^{d})$ if and only if for $\alpha,\beta \in (\NN_{0})^{d}$ we have that 
 \[ L^{(\alpha)} ( R^{(\beta)} (A) ) \in \cL(H^{r+|\beta| - |\alpha|}(\RR^{d} ) \to L^{2}(\RR^{d})).\] Here, we have written $\cL(H^{r+|\beta| - |\alpha|}(\RR^{d}) \to L^{2}(\RR^{d}))$ for bounded linear maps from the $r+|\beta|-|\alpha|$ Sobolev space to $L^{2}$ and have written $L^{(\alpha)}$ for $L$ composed with itself $\alpha$ times (similarly for $R^{(\beta)}$).
 
 \end{theo}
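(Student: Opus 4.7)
The forward direction follows from standard pseudodifferential calculus. If $A=\text{Op}(a)\in\Psi^r(\RR^d)$, the composition formula yields $[x_k,A]=i\,\text{Op}(\partial_{\xi_k}a)\in\Psi^{r-1}$ and $[\partial_k,A]=\text{Op}(\partial_{x_k}a)\in\Psi^{r}$, so $L_k$ lowers symbol order by one while $R_k$ preserves it. Iterating, $L^{(\alpha)}R^{(\beta)}(A)\in\Psi^{r-|\alpha|}$, which maps $H^{r-|\alpha|}\to L^2$ by the standard Sobolev mapping properties, and hence also $H^{r+|\beta|-|\alpha|}\to L^2$ since $r+|\beta|-|\alpha|\geq r-|\alpha|$.

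For the converse, the plan is to extract a candidate symbol from $A$ via the formal Kohn--Nirenberg identity $a(x,\xi)=e^{-ix\xi}(Ae^{iy\xi})(x)$. Since $y\mapsto e^{iy\xi}$ lies in no $L^2$-based Sobolev space, I would fix a cutoff $\chi\in C_c^\infty(\RR^d)$ with $\chi\equiv 1$ near the origin, set $\psi_{x,\xi}(y):=\chi(y-x)e^{iy\xi}$, and define
\[
a(x,\xi):=e^{-ix\xi}(A\psi_{x,\xi})(x).
\]
A direct computation gives $\widehat{\psi_{x,\xi}}(\eta)=e^{ix(\xi-\eta)}\hat\chi(\eta-\xi)$, so $\psi_{x,\xi}$ is frequency-localized within unit distance of $\xi$ and $\|\psi_{x,\xi}\|_{H^s}\lesssim_s\bangle{\xi}^s$. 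Using $\partial_{y_k}e^{iy\xi}=i\xi_k e^{iy\xi}$ and commuting derivatives through $A$ produces the identities
\[
\partial_{x_k}a(x,\xi)=e^{-ix\xi}((R_k A)\psi_{x,\xi})(x)+(\text{remainder from }\partial_k\chi),
\]
\[
\partial_{\xi_k}a(x,\xi)=-i\,e^{-ix\xi}((L_k A)\psi_{x,\xi})(x)+(\text{remainder from }\partial_k\chi),
\]
and iteration expresses $\partial_x^\beta\partial_\xi^\alpha a$ in terms of $L^{(\alpha')}R^{(\beta')}(A)$ acting on wave packets of this form with $|\alpha'|\leq|\alpha|$ and $|\beta'|\leq|\beta|$.

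The hypothesis controls the $L^2$ norms of these wave-packet evaluations, but I need pointwise values at $y=x$. I would apply Sobolev embedding on a unit ball around $x$ after using additional $R$-commutators to control enough $y$-derivatives of $A\psi_{x,\xi}$ in $L^2$. The hard part will be sharpness: a naive bookkeeping using only $\|\psi_{x,\xi}\|_{H^s}\lesssim\bangle{\xi}^s$ yields only the exotic bound $|\partial_x^\beta\partial_\xi^\alpha a(x,\xi)|\lesssim\bangle{\xi}^{r+|\beta|-|\alpha|}$, an $S^r_{1,1}$-type estimate rather than the target $S^r_{1,0}$ estimate $\bangle{\xi}^{r-|\alpha|}$. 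Recovering the missing $\bangle{\xi}^{-|\beta|}$ is the core of Beals's argument: one must exploit that $\psi_{x,\xi}$ is essentially a unit-scale wave packet at frequency $\xi$, so each $\partial_y$-derivative contributes a factor $\bangle{\xi}$, and correspondingly each extra $R$-commutator on $A$ saves one power of $\bangle{\xi}$ when tested against $\psi_{x,\xi}$. A careful Littlewood--Paley-type accounting — effectively decomposing $\psi_{x,\xi}$ according to the frequency shell at $|\xi|$ — recovers the sharp symbol estimate and gives $a\in S^r(\RR^{2d})$.

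To conclude, I would show $B:=A-\text{Op}(a)\in\Psi^{-\infty}$. The Schwartz kernel of $B$ is built from the cutoff tails $(1-\chi(y-x))e^{iy\xi}$, which are supported uniformly away from the diagonal. The $L$-commutator bounds, via the identity $(x_k-y_k)K_B(x,y)=K_{[x_k,B]}(x,y)$, yield $|K_B(x,y)|\lesssim_N|x-y|^{-N}$ for every $N$, while the $R$-commutator bounds yield joint smoothness; together they force $K_B$ to be a Schwartz kernel, so $B\in\Psi^{-\infty}$ and $A=\text{Op}(a)+B\in\Psi^r$, as required. The bulk of the technical difficulty lies in the frequency-localization step above; the forward direction and the kernel-decay analysis in this final step are essentially routine given the system of commutator bounds.
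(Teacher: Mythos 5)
You should first note that the paper does not prove this statement at all: it is quoted as background from Beals \cite{Beals:CommCharPSIDO}, so your proposal has to stand on its own. Your forward direction is correct and routine. The converse, however, has a genuine gap exactly at the point you yourself flag as ``the core of Beals's argument.'' Testing $L^{(\alpha)}(R^{(\beta)}(A))$ against the wave packets $\psi_{x,\xi}$, whose $H^{s}$-norm is of size $\langle\xi\rangle^{s}$, the hypotheses yield only $|\partial_{x}^{\beta}\partial_{\xi}^{\alpha}a(x,\xi)|\lesssim\langle\xi\rangle^{r+|\beta|-|\alpha|}$; upgrading this to the $S^{r}_{1,0}$ bound $\langle\xi\rangle^{r-|\alpha|}$ is precisely the nontrivial content of the theorem, and you do not supply it. Moreover, the mechanism you offer for the upgrade --- that ``each extra $R$-commutator saves one power of $\langle\xi\rangle$ when tested against $\psi_{x,\xi}$'' --- has no basis in the hypotheses: the assumption only permits each $R$-commutator to \emph{cost} a power (it is bounded from the smaller space $H^{r+|\beta|-|\alpha|}$), and no identity in your sketch produces a gain. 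The known proofs (Beals; Bony; H\"ormander's Weyl-calculus treatment) get the sharp estimates by first reducing to order zero by composing with powers of $(1-\Delta)$ and then proving a statement of the type ``bounded operator with bounded iterated commutators has bounded symbol,'' recovering the $(1,0)$-anisotropy by a rescaling of unit frequency shells; this rescaling is the Littlewood--Paley step you allude to but do not carry out, and without it your argument establishes at best an exotic symbol class, not $\Psi^{r}(\RR^{d})$. It is worth observing that the paper's proof of its isotropic analogue, Theorem \ref{theo:iso-beals}, proceeds by an entirely different route (the matrix characterization of Theorem \ref{theo-intro-main}), and that the lemma of Appendix \ref{appC} ($L^{2}$-boundedness of a small-order operator forces a bounded symbol) is exactly the kind of ingredient the missing step requires in the classical setting.

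Two secondary points. First, $(A\psi_{x,\xi})(x)$ needs justification, since $A\psi_{x,\xi}$ is a priori only a tempered distribution; your plan (control $y$-derivatives via $R$-commutators and apply Sobolev embedding on a unit ball) is workable, but it consumes hypotheses with $\beta\neq 0$ and therefore feeds back into the same sharp-estimate issue. Second, the Schwartz kernel of $B=A-\mathrm{Op}(a)$ is \emph{not} supported away from the diagonal --- the cutoff truncates the test function, not the kernel --- so at best one can prove rapid decay off the diagonal; and to run the identity $(x_{k}-y_{k})K_{B}=K_{[x_{k},B]}$ together with mapping bounds you need the commutator and Sobolev bounds for $\mathrm{Op}(a)$, i.e.\ the symbol estimates must already be in hand. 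So the concluding step is only in the correct logical order once the main estimate --- currently missing --- has been established.
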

 
 By using Theorem \ref{theo-intro-main}, we can give a version of commutator characterization in the setting of isotropic pseudodifferential operators. First, we recall the definition of isotropic Sobolev spaces
 \begin{align*}
 H^{s}_{\text{iso}}(\RR)  & = \{ f \in \cS'(\RR) : (1+H)^{s/2} f \in L^{2}(\RR) \} \\
 & = \left \{ f \in \cS'(\RR) : \sum_{k=0}^{\infty} (1+k)^{s} \bangle{f,\phi_{k}}_{L^{2}(\RR)}^{2} < \infty \right\}
 \end{align*}
 
 We define the operator $Z:\cS(\RR)\to \cS(\RR)$ by $\phi_{k}\mapsto \phi_{k-1}$ (extending linearly). It is clear that $Z^{\dagger}(\phi_{k}) = \phi_{k+1}$. Furthermore, notice that the matrix of $Z$ is 
 \[ K^{(Z)} = \left(\ \begin{matrix}
  0 \\
 1 & 0 \\
  & 1 & 0 \\
   & & 1& 0\\
   &&& \ddots
 \end{matrix}\ \right)
 \]
 and thus, by Theorem \ref{theo-intro-main}, $Z \in \Psi^{0}_{\text{iso}}(\RR)$ (but it is certainly not a differential operator). It is clear that $ZZ^{\dagger} = Id$. Also, recall the harmonic oscillator $H = -\partial_{x}^{2} + x^{2}$ has matrix 
  \[ K^{(H)} = \left(\ \begin{matrix}
  1 \\
  & 3 \\
  &  & 5 \\
   & & & 7\\
   &&& & \ddots
 \end{matrix}\ \right)
 \]
Given these two operators, we define $\tilde Z(A) := [A,Z]$ and $\tilde H(A) : =[A,H]$. Now, we recall our version of the isotropic Beals theorem and give the proof:
\newtheorem*{thm:associativity}{Theorem \ref{theo:iso-beals}}
\begin{thm:associativity}
An operator $A :\cS(\RR) \to \cS'(\RR)$ with $A \in \Psi^{r}_{\text{iso}}(\RR)$ if and only if for all $\alpha,\beta \in \NN_{0}$ and $s\in \RR$
\begin{equation}\label{eq:iso-comm-cond} \tilde H^{(\alpha)}(\tilde Z^{(\beta)}(A)) \in \cL(H^{r+s-2\beta}_{\text{iso}}(\RR) \to H^{s}_{\text{iso}}(\RR)). \end{equation}
\end{thm:associativity}
\begin{proof}
It is enough to show that \eqref{eq:iso-comm-cond} for all $\alpha,\beta\in\NN_{0}$ implies that $A \in \Psi^{r}_\text{iso}(\RR)$. We will show that $K^{(A)}$ is a symbol matrix of the appropriate order, $K^{(A)} \in SM^{r/2}(\NN_{0})$, and then use Theorem \ref{theo-intro-main} to conclude the desired result. 

First, notice that by repeating the argument following equation \eqref{eq:conj-H} (conjugating by the appropriate powers of the harmonic oscillator) we have that if an operator $B :\cS(\RR)\to\cS'(\RR)$ with $B  \in \cL(H^{r+s}_{\text{iso}}(\RR) \to H_\text{iso}^{s}(\RR))$ for all $s\in \RR$ and some $r \in \RR$, then 
\[ |(K^{(B)})_{m,n} | \lesssim (1+m+n)^{r/2} .\]
In particular, taking $B=A$ in the above, shows that $K^{(A)}$ has the desired overall decay of order $r/2$. It thus remains to show that the finite difference operator reduces this order of decay and off diagonal decay, following assumption \eqref{eq:iso-comm-cond}. For the finite difference operator, notice that 
\begin{align*}
(\findif{} K^{(A)})_{m,n} &  =\bangle{A\phi_{n+1},\phi_{m+1}}_{L^{2}(\RR)}  - \bangle{A\phi_{n},\phi_{m}}_{L^{2}(\RR)} \\
& = \bangle{AZ^{\dagger}\phi_{n},Z^{\dagger}\phi_{m}}_{L^{2}(\RR)}  - \bangle{A\phi_{n},\phi_{m}}_{L^{2}(\RR)}\\
& = \bangle{ZAZ^{\dagger}\phi_{n},\phi_{m}}_{L^{2}(\RR)}  - \bangle{A\phi_{n},\phi_{m}}_{L^{2}(\RR)}\\
& = \bangle{(AZ - [A,Z])Z^{\dagger}\phi_{n},\phi_{m}}_{L^{2}(\RR)}  - \bangle{A\phi_{n},\phi_{m}}_{L^{2}(\RR)}\\
& = \bangle{(A - [A,Z]Z^{\dagger})\phi_{n},\phi_{m}}_{L^{2}(\RR)}  - \bangle{A\phi_{n},\phi_{m}}_{L^{2}(\RR)}\\
& = -\bangle{ [A,Z]Z^{\dagger}\phi_{n},\phi_{m}}_{L^{2}(\RR)}.
\end{align*}
Because $K^{(Z^{\dagger})} \in SM^{0}(\NN_{0})$ is invertible, modulo a finite dimensional error (because $ZZ^{\dagger} = Id$ but $Z^{\dagger}Z (\phi_{k}) =\phi_{k}$ for $k>0$ and $0$ for $k=0$), \eqref{eq:iso-comm-cond} and the argument above with $B = \tilde Z(A)$ (along with the multiplicative property of kernel matrices, Lemma \ref{mat-mult}) implies that 
\[|(\findif{} K^{(A)})_{m,n} | \lesssim (1+m+n)^{r/2-1}. \] Repeating the above shows that 
\[|(\findif{\beta} K^{(A)})_{m,n} | \lesssim (1+m+n)^{r/2-\beta}. \] 

For the off diagonal decay, notice that for any $B  \in \cL(H^{s}_{\text{iso}}(\RR) \to L^{2}(\RR))$
\begin{align*}
2(m-n) (K^{(B)})_{m,n} & = (1+2m-1-2n)(K^{(B)})_{m,n} \\
& = \bangle{ B \phi_{n},(1+2m)\phi_{m}}_{L^{2}(\RR)}- \bangle{ B(1+2n) \phi_{n},\phi_{m}}_{L^{2}(\RR)}\\
& = \bangle{ B \phi_{n},H\phi_{m}}_{L^{2}(\RR)}- \bangle{ BH \phi_{n},\phi_{m}}_{L^{2}(\RR)}\\
& = \bangle{ HB \phi_{n},\phi_{m}}_{L^{2}(\RR)}- \bangle{ BH \phi_{n},\phi_{m}}_{L^{2}(\RR)}\\
& = - \bangle{ [B,H] \phi_{n},\phi_{m}}_{L^{2}(\RR)}.
\end{align*}
Thus, repeating this inductively, and combining with the above argument, we see that for all $N \geq 0$
\[ (m-n)^{N} |(\findif{\beta} K^{(A)})_{m,n} | \lesssim (1+m+n)^{r/2-\beta}\] 
which allows us to conclude $K^{(A)} \in SM^{r/2}(\NN_{0})$, as desired. 
\end{proof}
  
 \appendix 
 \section{Symbol Matrix Properties}\label{appA}
 In this appendix we compile various properties about matrices of operators and symbol matrices that were used in the preceding sections. First, we show that associating a map with its matrix is multiplicative in the same sense as finite dimensional linear operators. 
 \begin{lemm}\label{mat-mult}
The map from operators $A:\cS(\RR) \to \cS(\RR)$ to matrices $K^{(A)}: \NN_0\ \times \NN_0 \to\RR$ is a homomorphism where we define the product of two matrices to be the infinite dimensional analogue of matrix multiplication. Namely, for continuous linear $A,B: \cS(\RR)\to \cS(\RR)$ \[(K^{(AB)})_{m,n} = \sum_{k\in\NN_0} (K^{(A)})_{m,k} (K^{(B)})_{k,n}.\]
\end{lemm}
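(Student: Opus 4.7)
The plan is to reduce this to the Hermite expansion of $B\phi_n$ combined with the continuity of $A$ on $\cS(\RR)$. The key tool is that $\{\phi_k\}$ is not only an orthonormal basis of $L^2(\RR)$ but is in fact an unconditional Schauder basis of $\cS(\RR)$, so any Schwartz function can be expanded in the Hermite basis with convergence in the Schwartz topology.

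First, I would note that since $B\phi_n \in \cS(\RR) \subset L^2(\RR)$ and $\{\phi_k\}$ is orthonormal, we have
\[ B\phi_n = \sum_{k \in \NN_0} \langle B\phi_n,\phi_k\rangle_{L^2(\RR)} \phi_k = \sum_{k \in \NN_0} (K^{(B)})_{k,n} \phi_k \]
with convergence a priori in $L^2$. I would then upgrade this convergence to the Schwartz topology. Using the family of seminorms $\|f\|_s := \|(1+H)^{s/2} f\|_{L^2}$ (for $s \in \NN_0$), which is equivalent to the usual Schwartz seminorms, and the fact that $(1+H)^{s/2}\phi_k = (2+2k)^{s/2}\phi_k$, Parseval's identity applied to $(1+H)^{s/2}B\phi_n \in L^2(\RR)$ gives $\sum_k |(K^{(B)})_{k,n}|^2(2+2k)^s < \infty$ for every $s$. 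Hence the tail
\[ \Bigl\| B\phi_n - \sum_{k=0}^N (K^{(B)})_{k,n}\phi_k\Bigr\|_s^2 = \sum_{k=N+1}^\infty |(K^{(B)})_{k,n}|^2 (2+2k)^s \]
vanishes as $N \to \infty$ for every $s$, giving convergence in $\cS(\RR)$.

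Next, since $A:\cS(\RR)\to\cS(\RR)$ is continuous (in particular continuous into $L^2$), I can push $A$ through the convergent sum to obtain
\[ AB\phi_n = \sum_{k\in\NN_0} (K^{(B)})_{k,n}\, A\phi_k \]
with convergence in $\cS(\RR)$, and a fortiori in $L^2(\RR)$. Finally, pairing with $\phi_m$ and using continuity of the $L^2$ inner product,
\[ (K^{(AB)})_{m,n} = \langle AB\phi_n, \phi_m\rangle_{L^2} = \sum_{k\in\NN_0} (K^{(B)})_{k,n}\langle A\phi_k,\phi_m\rangle_{L^2} = \sum_{k\in\NN_0} (K^{(A)})_{m,k}(K^{(B)})_{k,n}, \]
as desired.

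The main obstacle, such as it is, is the promotion from $L^2$ convergence of the Hermite expansion to convergence in the Schwartz topology; without it one cannot apply continuity of $A:\cS(\RR)\to\cS(\RR)$ directly. Everything else is a mechanical bookkeeping step relying on standard properties of Hermite functions already recorded in Section \ref{sec-Prelim}.
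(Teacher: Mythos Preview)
Your proof is correct and follows essentially the same route as the paper: expand $B\phi_n$ in the Hermite basis with convergence in $\cS(\RR)$, then use continuity of $A$ to pass the sum through and pair with $\phi_m$. In fact you supply more detail than the paper, which simply asserts the $\cS(\RR)$-convergence of the Hermite expansion without the seminorm argument you give.
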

\begin{proof} For any Schwartz function $f \in \cS(\RR)$, we have that the following sum converges in $\cS(\RR)$: \[\sum_{0\leq k \leq N} \phi_k \langle f, \phi_k \rangle_{L^2(\RR)} \to f\] as $N\to\infty$. Now, using this fact, along with the fact that $A,B$ are continuous as operators on $\cS(\RR)$, the following calculation is justified:
\begin{align*}(K^{(AB)})_{m,n} & = \bangle{ AB\phi_n ,\phi_m}_{L^2(\RR)} \\
& = \bangle{ A\left(\sum_{k\in \NN_0} (K^{(B)})_{k,n}\phi_k \right) ,\phi_m}_{L^2(\RR)} \\
& = \bangle{ \sum_{k\in \NN_0} (K^{(B)})_{k,n} A(\phi_k)  ,\phi_m}_{L^2(\RR)} \\
& =  \sum_{k\in \NN_0} \bangle{ (K^{(B)})_{k,n} A(\phi_k)  ,\phi_m}_{L^2(\RR)}\\
& =  \sum_{k\in \NN_0} \bangle{ A\phi_k,\phi_m}_{L^2(\RR)} (K^{(B)})_{k,n} \\
& =  \sum_{k\in \NN_0} (K^{(A)})_{m,k}(K^{(B)})_{k,n}. 
 \end{align*}
 \end{proof}

Now we show that symbol matrices are closed under multiplying by a function which is polynomially bounded with the difference operator lowering the decay by one order. 
\begin{lemm}\label{sym-mat-mult}
If $f(m,n)$ is a symbol matrix of order $r$ and $g(m,n)$ is any function $g:\NN_0\times\NN_0 \to \RR$ satisfying  \[|\left(\findif{\gamma} g\right)(m,n)| \leq C_{\gamma} (1+|m|+|n|)^{s-|\gamma|}\] for $\gamma \in \NN_{0}$ and $s \in \RR$, then $f(m,n) g(m,n)$ is a symbol matrix of order $r+s$. 
\end{lemm}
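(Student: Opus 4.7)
The plan is to establish this multiplicatively via a discrete Leibniz rule for the difference operator $\triangle$, and then to combine the hypotheses term by term. The off-diagonal decay estimate $(1+|m-n|)^{-N}$ only has to come from the factor $f$, since $g$ is only assumed to satisfy the polynomial growth condition; meanwhile, the improvement in the power of $(1+m+n)$ under $\triangle^\alpha$ has to be shared between the two factors according to how many differences land on each.

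The first step will be to establish the discrete Leibniz formula. Writing $T$ for the diagonal shift $(TF)(m,n) = F(m+1,n+1)$, so that $\triangle = T - I$, one checks directly that
\[
\triangle(FG) = (\triangle F)\,G + (TF)\,(\triangle G).
\]
Iterating this identity (and using that $T$ and $\triangle$ commute) gives, by induction on $\alpha$, the formula
\[
\triangle^\alpha(fg)(m,n) = \sum_{k=0}^\alpha \binom{\alpha}{k}\,(\triangle^k f)(m+\alpha-k,n+\alpha-k)\,(\triangle^{\alpha-k} g)(m,n).
\]
This reduces the problem to estimating each summand.

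Next, I will apply the two hypotheses to each factor in the Leibniz sum. For the $f$-factor, the symbol matrix hypothesis with exponent $N$ gives
\[
|(\triangle^k f)(m+\alpha-k,n+\alpha-k)| \lesssim (1+m+n+2(\alpha-k))^{r-k}\,(1+|m-n|)^{-N},
\]
and the shift $\alpha-k$ is bounded in terms of $\alpha$ alone, so this is controlled by $C(1+m+n)^{r-k}(1+|m-n|)^{-N}$ (the difference $|m-n|$ is unchanged by a diagonal shift). The $g$-hypothesis gives directly $|(\triangle^{\alpha-k} g)(m,n)| \lesssim (1+m+n)^{s-(\alpha-k)}$. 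Multiplying and summing over $k$, every term contributes $(1+m+n)^{r+s-\alpha}(1+|m-n|)^{-N}$, so
\[
|\triangle^\alpha(fg)(m,n)| \leq C_{\alpha,N}\,(1+m+n)^{r+s-\alpha}\,(1+|m-n|)^{-N},
\]
which is precisely the condition $fg \in SM^{r+s}(\NN_0)$.

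There is no real obstacle: the only subtlety worth flagging is that the $\triangle^k f$ factor appears at a shifted argument $(m+\alpha-k,n+\alpha-k)$, but because the shift is bounded and the quantity $|m-n|$ is invariant under diagonal shifts, the symbol matrix estimates transfer at the cost of absorbing a constant depending on $\alpha$. Once the Leibniz identity is in hand, the rest is bookkeeping.
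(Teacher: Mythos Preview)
Your proof is correct and follows essentially the same approach as the paper's. The paper records the single-step Leibniz identity $\triangle[fg](m,n) = (\triangle f)(m,n)\, g(m+1,n+1) + f(m,n)\,(\triangle g)(m,n)$ and then simply says ``repeating the above calculation finishes the proof''; you write out the iterated Leibniz formula explicitly (with the shift placed on $f$ rather than $g$, which is an equally valid version) and carry out the term-by-term estimate in full, including the observation that diagonal shifts leave $|m-n|$ invariant and only perturb $(1+m+n)$ by a constant depending on $\alpha$.
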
 
\begin{proof}
It is easy to see that from the conditions on $f(m,n)$ and $g(m,n)$ we have \[|[fg](m,n)|\leq C'_{N,0} (1+|n-m|)^{-N}(1+|n|+|m|)^{r+s},\] so the only thing that is left to check is that applying the difference operator gives improved decay along the diagonal, which follows from
\[\findif{} \left[ fg\right](m,n)= \findif{} \left[f\right](m,n) \ g(m+1,n+1) + f(m,n) \findif{} \left[g\right](m,n).\]
Repeating the above calculation finishes the proof. 
\end{proof}

Because of the rapid decay of a symbol matrix off of the diagonal, we expect many of the properties of a diagonal matrix to hold for a symbol matrix. One such property is $\ell^2(\NN_0\times \NN_0)$ membership for symbol matrices:

\begin{lemm}\label{sym-mat-l2-memb}
If $r < - 1/2$, and $K \in SM^r(\NN_0)$ is any symbol matrix of order $r$  then $\{(K)_{m,n}\} \in \ell^2(\NN_0\times\NN_0)$. 
\end{lemm}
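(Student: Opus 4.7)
The plan is to invoke the symbol matrix bound with $\alpha = 0$ and a choice of $N$ to be made later, giving
\[
|K(m,n)|^2 \le C_{0,N}^2\,(1+m+n)^{2r}(1+|m-n|)^{-2N},
\]
and then estimate $\|K\|_{\ell^2}^2 = \sum_{m,n\ge 0} |K(m,n)|^2$ by slicing the double sum along off-diagonals indexed by $j = m-n \in \ZZ$.

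On a fixed diagonal $m-n = j$, with $m, n \ge 0$, the quantity $1+m+n$ takes the values $1+|j|,\,3+|j|,\,5+|j|,\dots$, so the partial sum along that diagonal satisfies
\[
\sum_{\substack{m,n\ge 0\\ m-n=j}}(1+m+n)^{2r} \;\le\; \sum_{k\ge 0}(1+k)^{2r} \;=:\; M \;<\; \infty,
\]
where convergence uses exactly the hypothesis $2r < -1$. The crucial feature is that $M$ is independent of $j$.

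Performing the outer sum over $j \in \ZZ$ then yields
\[
\|K\|_{\ell^2}^2 \;\le\; C_{0,N}^2\,M \sum_{j\in\ZZ}(1+|j|)^{-2N},
\]
which is finite as soon as $N \ge 1$. Since we are free to take any $N$ in the defining bound for $SM^r(\NN_0)$, this completes the argument and shows $K \in \ell^2(\NN_0\times\NN_0)$.

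There is no genuine obstacle here: the proof uses the two defining features of a symbol matrix in complementary roles. Diagonal polynomial growth of order $r$ is tamed by the hypothesis $r < -1/2$, while the rapid off-diagonal decay lets us choose $N$ as large as we need so that $(1+|j|)^{-2N}$ is summable over $j\in\ZZ$.
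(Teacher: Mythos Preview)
Your proof is correct and, in fact, cleaner than the paper's own argument. The paper proceeds by writing $m = n+k$ and splitting the sum according to whether $|k| \le |n|^s$ or $|k| > |n|^s$ for a carefully chosen $0 < s < 1$: near the diagonal it uses only the $(1+m+n)^{2r}$ factor and counts roughly $|n|^s$ terms per $n$, yielding $\sum_n |n|^s (1+n)^{2r}$; far from the diagonal it uses only the $(1+|k|)^{-2N}$ factor. This requires balancing $s$ against $r$ and $N$ at the end. Your decomposition along full off-diagonals $m-n = j$ uses both factors simultaneously and avoids the balancing step entirely: the key observation that $\sum_{m-n=j}(1+m+n)^{2r}$ is bounded uniformly in $j$ (since $2r < -1$) is exactly what makes the argument collapse to a product of two convergent one-variable sums. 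Both approaches work, but yours is the more natural one and identifies the hypothesis $r < -1/2$ as precisely the condition that each diagonal is square-summable.
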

\begin{proof}
For a fixed $0<s<1$ and $N \in \NN_0$, we bound the sum as follows \allowdisplaybreaks
\begin{align*}
\sum_{m,n\in \NN_0} |(K)_{m,n}|^2=& \sum_{n \in \NN_0, k \geq -n} |(K)_{n+k, n}|^2\\
=& \sum_{|k| \leq |n|^s, k\geq -n}  |(K)_{n+k,n}|^2+  \sum_{|k| > |n|^s, k \geq -n} |(K)_{n+k,n}|^2\\
\lesssim &  \sum_{|k| \leq |n|^s, k \geq -n}  (1+n + n+k)^{2r}\\
&\qquad +  \sum_{|k| > |n|^s} (1+|k|)^{-2N}(1+n + n+k)^{2r}\\
\lesssim&\sum_{|k| \leq |n|^s, k\geq -n}  (1+n)^{2r} +  \sum_{|k| > |n|^s, k \geq -n} (1+|k|)^{-2N}(1+|k|)^{2|r|/s}\\
\lesssim&\sum_{n \in \NN_0} \#\{k: |k| \leq |n|^s\}  (1+n)^{2r}\\
& \qquad+  \sum_{n \in \NN_0} \int_{|x| > |n|^s} (1+|x|)^{-2N+ 2r/s}\ dx\\
\lesssim&   \sum_{n \in \NN_{0}} |n|^{s} (1+n)^{2r} \\
&\qquad+  \sum_{n \in \NN_{0}} \int_{ |n|^s}^\infty  (1+t)^{-2N+2r/s}\ dt\\
\lesssim  & \sum_{n \in \NN_{0}} |n|^{s}  (1+n)^{2r} \\
&\qquad+ \sum_{n \in \NN_{0}} (1+|n|^s)^{1-2N + 2r/s}.
\end{align*} 
For $s + 2r < -1$ and $s(1-2N) + 2r<-1$ the final terms will be finite. Because $r<-1/2$, we can choose $s>0$ such that $2r < -1-s$. For this $s$, we can then choose an $N$ large enough such that $s(1-2N) < -1 -2 r$. This completes the proof. 
\end{proof}

Given an arbitrary symbol matrix, it defines an operator on Schwartz functions, as is shown in the next lemma. 

\begin{lemm} \label{sym-mat-cont}
If $f(m,n)$ is a symbol matrix of order $r$, then it defines a continuous operator $\cS(\RR)\to\cS(\RR)$. 
\end{lemm}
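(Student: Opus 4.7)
The plan is to use the Hermite basis to identify $\cS(\RR)$ with the space of rapidly decreasing sequences. For $u\in\cS(\RR)$, write $u = \sum_{n} a_{n}\phi_{n}$ with $a_{n} = \bangle{u,\phi_{n}}_{L^{2}(\RR)}$, and recall the standard fact that $u\in\cS(\RR)$ if and only if $(1+n)^{M}|a_{n}|$ is uniformly bounded for every $M\in\NN_{0}$; this follows from $H\phi_{n} = (1+2n)\phi_{n}$ together with the characterization of the Schwartz topology by powers of the harmonic oscillator. I would then define $A$ acting on $u$ by setting $Au := \sum_{m} b_{m}\phi_{m}$, where
\begin{equation*}
b_{m} \;:=\; \sum_{n\in\NN_{0}} f(m,n)\,a_{n}.
\end{equation*}

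The first step is to verify that $b_{m}$ is well defined by absolute convergence: the symbol matrix bound $|f(m,n)|\leq C_{N}(1+m+n)^{r}(1+|m-n|)^{-N}$ together with rapid decay of $a_{n}$ makes the sum converge for each fixed $m$. The main step is to show that $\{b_{m}\}$ is itself rapidly decreasing. Using $(1+m+n)^{r}\leq (1+m)^{\max(r,0)}(1+n)^{\max(r,0)}$, I would estimate
\begin{equation*}
|b_{m}| \;\lesssim\; (1+m)^{\max(r,0)} \sum_{n} (1+n)^{\max(r,0)-M}(1+|m-n|)^{-N},
\end{equation*}
and split the sum into the two regions $n\leq m/2$ and $n>m/2$. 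In the first region, off-diagonal decay forces $(1+|m-n|)^{-N}\lesssim (1+m)^{-N}$, and what remains is summable in $n$; the contribution is bounded by $(1+m)^{\max(r,0)-N}$. In the second region, $(1+n)^{\max(r,0)-M}\lesssim (1+m)^{\max(r,0)-M}$, and the residual sum $\sum_{n}(1+|m-n|)^{-N}$ is finite for $N\geq 2$; the contribution is bounded by $(1+m)^{2\max(r,0)-M}$. Choosing $N$ and $M$ sufficiently large gives $(1+m)^{L}|b_{m}|\leq C_{L}$ for any desired $L$, so that $Au\in\cS(\RR)$ via the Hermite-coefficient characterization.

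Continuity is then a matter of tracking the constants in the preceding estimate: the bound on $\sup_{m}(1+m)^{L}|b_{m}|$ is a constant (depending on $L$, $r$, and the chosen $N$, $M$) times $\sup_{n}(1+n)^{M}|a_{n}|$, which is precisely the statement that $A:\cS(\RR)\to\cS(\RR)$ is continuous between the Fr\'echet topologies. The main (and really only) obstacle is the splitting of the sum and the balancing of exponents to handle both signs of $r$; everything else is bookkeeping that follows immediately from Definition \ref{defi:SMr}.
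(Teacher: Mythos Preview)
Your argument is correct. Both you and the paper begin by identifying $\cS(\RR)$ with the space $s(\NN_{0})$ of rapidly decreasing sequences via Hermite coefficients, reducing the lemma to continuity of the map $\{a_{n}\}\mapsto\{b_{m}\}$, $b_{m}=\sum_{n}f(m,n)a_{n}$, on $s(\NN_{0})$. The difference lies entirely in how that sequence-space estimate is carried out.

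The paper conjugates by weight operators $L_{t}:\{x_{k}\}\mapsto\{(1+k)^{t}x_{k}\}$, bounds $\Vert L_{s}\circ F\circ L_{l}(\cdot)\Vert_{N}$ using only the crude growth estimate $|f(n,k)|\leq C(1+n+k)^{r}$, and then unwinds the conjugation. You instead estimate $|b_{m}|$ directly by splitting the sum at $n=m/2$ and invoking the off-diagonal decay $(1+|m-n|)^{-N}$ in the region $n\leq m/2$ to absorb the weight $(1+m)^{L}$. Your route is the more transparent one, and in fact the more honest one: the off-diagonal decay is genuinely needed here (the matrix $f\equiv 1$ obeys $|f|\leq(1+m+n)^{0}$ yet sends every rapidly decreasing sequence to a nonzero constant sequence), and the paper's final step---applying $L_{-s}$ with $s<0$ as though it were bounded from the seminorm $\Vert\cdot\Vert_{N}$ to itself---glosses over the fact that $\Vert L_{-s}(z)\Vert_{N}=\Vert z\Vert_{N-s}$, which would require control of a \emph{higher} seminorm of $L_{s}\circ F\circ L_{l}$ than the one actually established. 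Your dyadic splitting sidesteps this entirely and delivers the Fr\'echet estimate $\sup_{m}(1+m)^{L}|b_{m}|\leq C\sup_{n}(1+n)^{M}|a_{n}|$ in a single pass.
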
 
\begin{proof}
It is clear that our desired result is equivalent to proving that the operator
 \[ F: \{x_k\}_{k\in \NN_{0}} \mapsto \sum_{k \in \NN_0} f(n,k) x_k\] is a continuous linear operator on the space of rapidly decreasing sequences \[s(\NN_0) = \{ \{x_k\}_{k\in\NN_0} : |(1+|k|)^N x_k| \leq C_N \ \forall N\in\NN_0\}\] where the topology on $s(\NN_0)$ is given by the seminorms \[\Vert x_k \Vert_N = \sup_k |(1+k)^Nx_k|.\] 

Because a symbol matrix of negative order is also a symbol matrix of zero order, we may assume that $r\geq 0$. 
Notice that multiplication by $(1+|k|)^s$ is a continuous linear bijection $s(\NN_0) \to s(\NN_0)$. Denote this operator by $L_s$. Choosing any $N$, notice that 
\begin{align*}
& \Vert L_s\circ  F\circ L_l (\{x_k\})\Vert_N\\
 & = \left\Vert \sum_{k\in\NN_0} (1+|n|)^{s} f(n,k) (1+|k|)^{l}x_k\right\Vert_N \\
& = \sup_n (1+|n|)^{N+s} \left\vert\sum_{k\in\NN_0} f(n,k) (1+|k|)^l x_k \right\vert \\
& \leq C \sup_n (1+n)^{N+s} \sum_{k\in\NN_0} (1+|n|+|k|)^r (1+|k|)^l |x_k | \\
& \leq C\Vert x_k\Vert_{0} \sup_n (1+|n|)^{N+s} \sum_{k\in\NN_0} (1+|n|+|k|)^r (1+|k|)^{l}\\
& \leq C\Vert x_k\Vert_{0} \sup_n (1+|n|)^{N+s} \sum_{k\in\NN_0}(1+|n|)^r(1+|k|)^r (1+|k|)^{l}\\
& = C\Vert x_k\Vert_{0} \sup_n (1+|n|)^{N+s + r} \sum_{k\in\NN_0} (1+|k|)^{l+r}.
\end{align*} 
In particular, taking $l < -1-r$, and $s< -N -r$, we have that \[\Vert L_s \circ F \circ L_l (\{x_k\}) \Vert_N\leq C_{l,s} \Vert x_k \Vert_0.\]
Thus, for these $s,l$ we have that \begin{align*} \Vert F(\{x_k\}) \Vert_N & = \Vert (L_{-s} ( L_s \circ F \circ L_l) )( L_{-l} (\{x_k\})\Vert_N\\ 
& \leq \Vert L_{-s} \Vert_{\mathscr{L}(s(\NN_0)} \Vert L_s \circ F \circ L_s (L_{-l}(\{x_k\}) \Vert_N\\
&\leq C_{l,s} \Vert L_{-s} \Vert_{\mathscr{L}(s(\NN_0))} \Vert L_{-l} \Vert_{\mathscr{L}(s(\NN_0))} \Vert x_k\Vert_0.
\end{align*}
\end{proof}

Furthermore, an order 0 symbol matrix considered as an operator on functions, as in the previous lemma, extends to a bounded operator on $L^2(\RR)$
\begin{lemm}\label{lem:ord0-bd}
Considering an order 0 symbol matrix $f(m,n)$ as an operator \[A: \cS(\RR) \to \cS(\RR)\] (as in the previous lemma), $A$ extends to a bounded operator \[A:L^2(\RR)\to L^2(\RR).\]
\end{lemm}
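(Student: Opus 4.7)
The plan is to transfer the boundedness question to $\ell^2(\NN_0)$ via the Hermite basis and then apply the Schur test.

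First I would use the fact that $\{\phi_n\}_{n\in\NN_0}$ is an orthonormal basis of $L^2(\RR)$, so the map $U : u \mapsto \{\bangle{u,\phi_n}_{L^2(\RR)}\}_{n\in\NN_0}$ is a unitary isomorphism $L^2(\RR) \to \ell^2(\NN_0)$, sending $\cS(\RR)$ onto a dense subspace of rapidly decreasing sequences. Under this identification, the action of $A$ on a Schwartz function $u = \sum_n a_n \phi_n$ produces $Au = \sum_m \big(\sum_n f(m,n)a_n\big)\phi_m$, so the claim that $A$ extends to a bounded operator on $L^2(\RR)$ is equivalent to the claim that the infinite matrix $(f(m,n))_{m,n}$ defines a bounded operator on $\ell^2(\NN_0)$.

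Next I would apply the Schur test. Taking $\alpha = 0$ in the symbol matrix definition and using that the order is $r = 0$ gives, for every $N \in \NN_0$, the pointwise bound
\[|f(m,n)| \leq C_N(1+m+n)^{0}(1+|m-n|)^{-N} = C_N(1+|m-n|)^{-N}.\]
Choosing $N = 2$, the row and column sums are uniformly bounded:
\[\sup_{m\in\NN_0}\sum_{n\in\NN_0} |f(m,n)| \leq C_2 \sup_{m}\sum_{n\in\NN_0}(1+|m-n|)^{-2} \leq C,\]
with the analogous bound for $\sup_n \sum_m |f(m,n)|$. The Schur test then yields that $(f(m,n))$ acts as a bounded operator on $\ell^2(\NN_0)$ with operator norm at most $C$, and transporting this back through $U^{-1}$ gives the desired extension $A : L^2(\RR) \to L^2(\RR)$.

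There is no real obstacle here; the only thing to verify carefully is that the matrix action in Lemma \ref{sym-mat-cont} genuinely coincides with $A$ read in the Hermite basis (which is immediate from the definition $f(m,n) = K^{(A)}_{m,n}$) and that the Schwartz functions are dense in $L^2(\RR)$, so the bounded operator on the dense image extends uniquely. Everything else is the standard Schur-test estimate, which works because at order $0$ a symbol matrix is simply bounded with rapid off-diagonal decay.
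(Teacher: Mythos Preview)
Your proposal is correct and follows essentially the same approach as the paper: both reduce to $\ell^2(\NN_0)$ via the Hermite basis, use the $\alpha=0$ bound $|f(m,n)|\leq C_N(1+|m-n|)^{-N}$ from the definition of an order~$0$ symbol matrix, and then invoke the Schur test. The only cosmetic difference is that the paper writes out the Schur-test Cauchy--Schwarz estimate by hand rather than citing it as a black box.
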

\begin{proof}
This follows readily from Schur's test (for example, see Theorem 5.2 in \cite{Halmos:IntOp}) but for completeness we will give the proof. Our proof is essentially the same as the proof in \cite{Halmos:IntOp}, except in far less generality.

For $g \in L^2(\RR)$, we have that $\hat g(k) := \langle g, \phi_k\rangle_{L^2(\RR)}$ is in $\ell^2(\NN_0)$. Thus 
\begin{align*}
\sum_{n \in \NN_0} &  \left( \sum_{k\in\NN_0} f(n,k) \hat g(k) \right)^2\\
 & \leq \sum_{n \in \NN_0} \left( \sum_{k\in\NN_0} |f(n,k)|| \hat g(k) |\right)^2\\
&  = \sum_{n \in \NN_0} \left( \sum_{k\in\NN_0} \sqrt{|f(n,k)|}\sqrt{|f(n,k)|}| \hat g(k) |\right)^2\\
&  \leq  \sum_{n \in \NN_0} \left( \sum_{k\in\NN_0}{|f(n,k)|} \right) \left(\sum_{k\in \ZZ^d}{|f(n,k)|}| \hat g(k) |^2\right)\\
& \leq C_N^2  \sum_{n \in \NN_0} \left( \sum_{k\in\NN_0} (1+|n-k|)^{-N} \right) \left(\sum_{k\in \ZZ^d} (1+|n-k|)^{-N}| \hat g(k) |^2\right)\\
& \leq C_N^2  \left( \sum_{k\in\NN_0} (1+|k|)^{-N} \right) \left( \sum_{n,k \in \NN_0}(1+|n-k|)^{-N}| \hat g(k) |^2 \right)\\
& = C_N^2 \left( \sum_{k\in\NN_0}(1+|k|)^{-1} \right)^2 \left( \sum_{k \in \NN_0} |\hat g(k)|^2 \right).
\end{align*} 
\end{proof}
 
 \section{Weighted $L^2$ bounds into weighted $L^\infty$ bounds}\label{appB}%\label{L2-to-Linfty:sec}
 
 Now, we will prove the following lemma. It is stated in \cite{Melrose:MAnotes}, but given as an exercise. We include the proof for completeness. 
\begin{lemm} \label{L2-to-Linfty:theo} 
For $f\in C^\infty (\RR^d)$, if \[\frac{\partial^\alpha f}{\partial x^\alpha}(x) \in (1+|x|^2)^{(s-|\alpha|)/2} L^2(\RR^d)\] for all $\alpha \in \ZZ^d$, then for $s'>s-d/2$ and any $\alpha \in \ZZ^d$ \[\frac{\partial^\alpha f}{\partial x^\alpha}(x) \in (1+|x|^2)^{(s'-|\alpha|)/2} L^\infty(\RR^d).\]
\end{lemm}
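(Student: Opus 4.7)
The plan is to localize via a scaled Sobolev embedding on balls whose radius grows linearly with $|x_0|$. On any such ball the weight $(1+|x|^2)^{(s-|\gamma|)/2}$ is essentially constant, so the $L^2$ hypotheses ``freeze'' and convert cleanly into pointwise bounds.

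Concretely, for any integer $k > d/2$, rescaling the usual embedding $H^k(B(0,1)) \hookrightarrow L^\infty(B(0,1))$ via $y = (x-x_0)/r$ produces
$$\|g\|_{L^\infty(B(x_0,r))} \leq C \sum_{|\beta|\leq k} r^{|\beta|-d/2} \|\partial^\beta g\|_{L^2(B(x_0,r))}$$
for every $g \in C^\infty(B(x_0,r))$. I will apply this to $g = \partial^\alpha f$ with $r = |x_0|/2$ whenever $|x_0| \geq 2$. On this ball one has $|x_0|/2 \leq |x| \leq 3|x_0|/2$, so the weight $(1+|x|^2)^{(s-|\alpha|-|\beta|)/2}$ is comparable to $|x_0|^{s-|\alpha|-|\beta|}$ from both above and below, irrespective of the sign of the exponent. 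Writing $h_\gamma(x) := (1+|x|^2)^{-(s-|\gamma|)/2} \partial^\gamma f(x)$, which lies in $L^2(\RR^d)$ by hypothesis, this yields
$$\|\partial^{\alpha+\beta} f\|_{L^2(B(x_0,|x_0|/2))} \lesssim |x_0|^{s-|\alpha|-|\beta|} \|h_{\alpha+\beta}\|_{L^2(\RR^d)}.$$

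Substituting into the scaled Sobolev bound, the $|\beta|$-exponent in $r^{|\beta|-d/2}$ exactly cancels the $|x_0|^{-|\beta|}$ from the weight, and every term collapses to $|x_0|^{s-|\alpha|-d/2}$. This gives $|\partial^\alpha f(x_0)| \lesssim |x_0|^{s-|\alpha|-d/2}$ for $|x_0| \geq 2$. For $|x_0| \leq 2$, smoothness of $f$ bounds $\partial^\alpha f$ directly on this compact set. Combining the two regimes produces the global pointwise estimate $|\partial^\alpha f(x_0)| \lesssim (1+|x_0|^2)^{(s-d/2-|\alpha|)/2}$, which majorizes $(1+|x_0|^2)^{(s'-|\alpha|)/2}$ as soon as $s' \geq s - d/2$, in particular for every $s' > s - d/2$.

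The only point requiring genuine care is the scaling bookkeeping: verifying that the choice $r \sim |x_0|$ is precisely what makes the Sobolev factor $r^{|\beta|-d/2}$ cancel the $|x_0|^{-|\beta|}$ coming from the weighted $L^2$ estimate. Applying Sobolev on a ball of bounded radius would instead give only the weaker bound $|\partial^\alpha f(x_0)| \lesssim |x_0|^{s-|\alpha|}$ and miss the $d/2$ improvement. Beyond that, the argument uses nothing deeper than the standard Sobolev embedding and the smoothness of $f$.
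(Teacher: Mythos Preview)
Your proof is correct and genuinely different from the paper's. The paper proceeds via polar coordinates $x=t\omega$: it first observes that angular derivatives preserve the weighted $L^2$ class while radial derivatives lower the power of $t$ by one, then applies Sobolev embedding on the compact sphere $S^{d-1}$ to obtain $\sup_\omega$ control, and finally handles the radial variable by writing $t^p\partial_t^l f$ as the integral of its $t$-derivative and applying Cauchy--Schwarz. Your argument bypasses the polar decomposition entirely by applying a single scaled Euclidean Sobolev inequality on balls $B(x_0,|x_0|/2)$, the point being that on such a ball the weight is essentially constant and the scaling factor $r^{|\beta|-d/2}$ exactly cancels the $|x_0|^{-|\beta|}$ from the weighted hypothesis. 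This is shorter and more elementary; it also shows directly that the endpoint $s'=s-d/2$ already works, whereas the paper's integral-in-$t$ step needs a strict inequality for convergence. The polar-coordinate approach, on the other hand, makes the separate roles of angular and radial regularity transparent, which can be advantageous for weights not of pure power type.
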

\begin{proof}
We may assume that $|x|>1$ on $\supp f$, because only the asymptotic behavior as $|x| \to \infty$ matters for either property, as $f$ is smooth. We introduce polar coordinates on $\{|x|>1\} \subset \RR^d$ \[ x = t \omega \text{ for } \omega \in S^{d-1} \text{ and } t>1 .\] In polar coordinates, our assumption on $f$ can be thus written as  \[\frac{\partial^\alpha f}{\partial x^\alpha} f(x) \in t^{s-|\alpha|} L^2(\RR^+\times S^{d-1}, t^{d-1} dt d\omega ).\] 
It is clear that $x_j \in t C^\infty(S^{d-1})$, so we see that \[\frac{\partial f}{\partial \omega_k} = \sum_{j=1}^d\frac{\partial x_j}{\partial \omega_k} \frac{\partial f}{\partial x_j} \in t^s L^2(\RR^+\times S^{d-1}, t^{d-1} dt d\omega ),\] because differentiating in $x_j$ lowers the power of $t$ by one, but a power of $t$ is also regained from the $\frac{\partial x_j}{\partial \omega_k}$, and multiplying by a function in $C^\infty(S^{d-1})$ cannot hurt the $L^2(S^{d-1})$ bounds. Similarly
\[\frac{\partial f}{\partial t} = \sum_{j=1}^d \frac{\partial x_j}{\partial t} \frac{ \partial f}{\partial x_j}  \in t^{s-1} L^2(\RR^+\times S^{d-1} , t^{d-1} dtd\omega). \]
Thus, we have that for any differential operator on $S^{d-1}$, $P$, and any $l \in \NN_0$ the assumption on $f$ gives \[ \frac{\partial^l}{\partial t^l} (P f(t,\omega))  \in t^{s-l} L^2(\RR^+\times S^{d-1}, t^{d-1} dt d\omega ).\] We can rewrite this as being integrable on $\RR^+$ with values in $L^2(S^{d-1})$\[ \frac{\partial^l}{\partial t^l} (P f(t,\omega))  \in t^{s-l - (d-1)/2} L^2(\RR^+; L^2 (S^{d-1})).\] This does not depend on the exact form of $P$, or even its order, so we can take $P$ to be very large order and elliptic, and thus for any $k$, we can consider original $f$ as a $L^2$ function on $\RR^+$ taking values in $H^k(S^{d-1})$ \[ \frac{\partial^l}{\partial t^l} ( f(t,\omega))  \in t^{s-l -(d-1)/2} L^2(\RR^+; H^k(S^{d-1})).\]Thus, for any differential operator on $S^{d-1}$, $Q$ of order $q$ we have \[ \frac{\partial^l}{\partial t^l} (Q f(t,\omega))  \in t^{s-l - (d-1)/2} L^2(\RR^+; H^{k-q}(S^{d-1})).\] For large enough $k$, by Sobolev embedding $H^{k-q} (S^{d-1}) \subset L^\infty(S^{d-1})$, so we have that 
 \[ \sup_{w \in S^{d-1}} \left\vert \frac{\partial^l}{\partial t^l} (Q f(t,\omega))  \right\vert \in t^{s-l - (d-1)/2} L^2(\RR^+).\] 
 Writing  \[ \sup_{w \in S^{d-1}} \left\vert \frac{\partial}{\partial t} \left(t^p \frac{\partial^{l}}{\partial t^{l}} (Q f(t,\omega)) \right) \right\vert = t^{s-l  + p-1 - (d-1)/2} g(t),\] for $g(t) \in L^2(\RR^+)$. Because $L^\infty$ membership for smooth functions does not depend on its values on any compact set, $g$ is zero for $t<2$.  Furthermore, \[ t^p \frac{\partial^l}{\partial t^l} (Q f(t,\omega)) = \int_{0}^t  \frac{\partial}{\partial t' } \left( t'^p\frac{\partial^{l+1}}{\partial {t'}^{l+1}} (Q f(t',\omega)) \right)dt'\] so 
 \begin{align*}& \sup_{t \in \RR^+, w\in S^{d-1}}\left\vert t^p\frac{\partial^l}{\partial t^l} (Q f(t,\omega)) \right\vert \\
 &\leq  \int_{0}^\infty \left\vert \frac{\partial}{\partial t'} \left( {t'}^{p} \frac{\partial^{l}}{\partial {t'}^{l}} (Q f(t',\omega))\right)\right\vert dt'\\
  &=  \int_{1}^\infty {t'}^{s-l-(d-1)/2+p-1} \left\vert g(t')  \right\vert dt'\\
   &\leq\left(  \int_{1}^\infty {t'}^{2s-2l-d-1 + 2p} \ dt'\right)^{1/2} \left( \int_1^\infty \vert g(t') \vert\  dt' \right)^{1/2}.
 \end{align*} This is finite for $2s-2l -d -1 + 2p < -1$, or in other words $s - l - d/2 + p < 0$. Reverting to regular coordinates, this proves the lemma. 
\end{proof}
 
 \section{Boundedness on $L^2$ implies a bounded symbol}\label{appC} %\label{ord0op:sec}

 In this appendix we prove that given a isotropic pseudodifferential operator of order close enough to zero which is bounded on $L^2$ necessarily has a bounded symbol. The proof basically involves the idea of a pseudodifferential operator acting on an exponent, but we could not find this particular setting in the literature.
  \begin{lemm}
 For $Q \in \Psi_\text{iso}^{\epsilon_0} (\RR)$ for some $\epsilon_0<1/2$, if $Q$ is bounded on $L^2(\RR)$, then the symbol of $Q$, $\sigma_L(Q) = q(x,\xi)$ is bounded, i.e. \[ \sup_{(x,\xi) \in \RR^2} |q(x,\xi) | < \infty.\]
 \end{lemm}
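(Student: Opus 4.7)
The plan is to probe $Q$ with a family of Gaussian coherent states centered at an arbitrary phase-space point $(x_{0},\xi_{0})$, read off the value $q(x_{0},\xi_{0})$ of the left symbol from the resulting matrix element, and use $L^{2}$-boundedness of $Q$ to get a bound uniform in $(x_{0},\xi_{0})$.

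First I would set $\phi_{x_{0},\xi_{0}}(y):=e^{iy\xi_{0}}e^{-(y-x_{0})^{2}/2}$. This is Schwartz with $\|\phi_{x_{0},\xi_{0}}\|_{L^{2}(\RR)}^{2}=\sqrt{\pi}$ independent of $(x_{0},\xi_{0})$, so that $L^{2}$-boundedness of $Q$ gives
\[|\bangle{Q\phi_{x_{0},\xi_{0}},\phi_{x_{0},\xi_{0}}}_{L^{2}(\RR)}|\leq \sqrt{\pi}\,\|Q\|_{\cL(L^{2}(\RR))}\]
uniformly in $(x_{0},\xi_{0})$. A direct computation that performs the $y$-integral (Gaussian against plane wave) in the oscillatory-integral definition of $Q\phi_{x_{0},\xi_{0}}$ and then pairs against $\overline{\phi_{x_{0},\xi_{0}}}$ rewrites this matrix element as a Bargmann-type transform of the symbol:
\[\bangle{Q\phi_{x_{0},\xi_{0}},\phi_{x_{0},\xi_{0}}}_{L^{2}(\RR)}=\frac{1}{\sqrt{2\pi}}\int_{\RR^{2}}q(x_{0}+u,\xi_{0}+\eta)\,e^{iu\eta-(u^{2}+\eta^{2})/2}\,du\,d\eta.\]

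The next step is to extract $q(x_{0},\xi_{0})$ from this integral up to a bounded error, for $|x_{0}|+|\xi_{0}|$ large. On the near region $|u|+|\eta|\leq (1+|x_{0}|+|\xi_{0}|)/2$ I would Taylor-expand $q(x_{0}+u,\xi_{0}+\eta)$ around $(x_{0},\xi_{0})$. The constant term contributes exactly $\sqrt{\pi}\,q(x_{0},\xi_{0})$ via the elementary identity $(2\pi)^{-1/2}\int e^{iu\eta-(u^{2}+\eta^{2})/2}\,du\,d\eta=\sqrt{\pi}$; the linear terms in $u,\eta$ produce odd Gaussian-type moments and vanish; each higher-order Taylor term contributes a coefficient $\partial_{x}^{\alpha}\partial_{\xi}^{\beta}q(x_{0},\xi_{0})$ of size $(1+|x_{0}|+|\xi_{0}|)^{\epsilon_{0}-|\alpha|-|\beta|}$ against a finite moment, which is uniformly bounded (indeed $o(1)$) once $|\alpha|+|\beta|\geq 2$; and the Taylor remainder at a suitably high order $N$ is controlled by $(1+|x_{0}|+|\xi_{0}|)^{\epsilon_{0}-N-1}$ times a bounded Gaussian moment. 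On the far region $|u|+|\eta|>(1+|x_{0}|+|\xi_{0}|)/2$, the crude symbol bound $|q(x,\xi)|\lesssim(1+|x|+|\xi|)^{\epsilon_{0}}$ pitted against the Gaussian weight $e^{-(u^{2}+\eta^{2})/2}$ yields a super-exponentially small contribution in $|x_{0}|+|\xi_{0}|$. Combining, $|q(x_{0},\xi_{0})|\leq C\|Q\|_{\cL(L^{2}(\RR))}+O(1)$ for $|x_{0}|+|\xi_{0}|$ large; since $q\in C^{\infty}(\RR^{2})$ is continuous and hence bounded on compacta, the global sup bound follows.

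The main obstacle will be bookkeeping the oscillatory Gaussian moments that arise in the Taylor terms: the twist $e^{iu\eta}$ prevents these from being pure Gaussian averages, but each moment is evaluated by successive $\partial_{\eta}$-differentiation of the identity $\int e^{iu\eta-u^{2}/2}\,du=\sqrt{2\pi}\,e^{-\eta^{2}/2}$, producing Hermite-polynomial prefactors which integrate cleanly. The hypothesis $\epsilon_{0}<1/2$ buys enough decay in the symbol estimates to ensure that the errors from both the Taylor tail and the far region remain bounded independently of $(x_{0},\xi_{0})$, so that the main term $\sqrt{\pi}\,q(x_{0},\xi_{0})$ inherits the required upper bound from the operator norm of $Q$.
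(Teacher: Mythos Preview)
Your argument is correct and takes a genuinely different route from the paper's proof. The paper localizes with \emph{compactly supported} bump functions $\varphi(\cdot-z_{0})$ carrying a \emph{quadratic} phase $e^{iy^{2}v}$, works with $Q^{*}Q$ rather than $Q$, and as a result can only read off the symbol along rays $\xi=2vz_{0}$ with $|v|\leq 1$; it then needs a second step---conjugating by the Fourier transform---to cover the complementary cone in phase space. Your Gaussian coherent states $\phi_{x_{0},\xi_{0}}$ hit an arbitrary point $(x_{0},\xi_{0})$ directly, and the Gaussian weight turns the matrix element into an exact, explicitly computable convolution of $q$ with a fixed Schwartz kernel, so the Taylor expansion is very clean and no cone/Fourier-conjugation dichotomy is needed. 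A side benefit is that your approach does not actually use the full strength of $\epsilon_{0}<1/2$: since the linear Taylor terms vanish and the quadratic ones already carry a factor $(1+|x_{0}|+|\xi_{0}|)^{\epsilon_{0}-2}$, your argument goes through for any $\epsilon_{0}<2$ (and, pushing the expansion further, for all $\epsilon_{0}$). In the paper, by contrast, passing to $Q^{*}Q$ doubles the order to $2\epsilon_{0}$, which is what forces $\epsilon_{0}<1/2$ for the first-order remainder to decay. So your route is both more direct and sharper; the paper's route has the minor advantage that working with $Q^{*}Q$ makes the quantity being bounded manifestly nonnegative, but this is not essential.
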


\begin{proof}
Because $Q$ is bounded on $L^2(\RR)$ there is some $C>0$ such that \[ \Vert Q f \Vert_{L^2(\RR)} \leq \sqrt{C} \Vert f\Vert_{L^2(\RR)}.\] Choose $\epsilon>0$, $z_0\in \RR$ and $\varphi(x) \in C_0^\infty (\RR)$ a bump function with $\varphi(x)\in[0,1]$ and $\varphi \equiv 1$ for $|x|<\epsilon/2$ and $\varphi\equiv 0$ for $|x| > \epsilon$. Let $\varphi_{z_0} (x) = \varphi(x-z_0)$. For $v \in [-1,1]$ we thus have that \[ \Vert Q(e^{iy^2 v} \varphi_{z_0}(y)) \Vert_{L^2(\RR)}^2 \leq C \Vert \varphi_{z_0} \Vert_{L^2(\RR)}^2=C \Vert \varphi \Vert_{L^2(\RR)}^2.\] Thus, we have that \begin{equation}\label{eq:def-I} I_v(z_0) := \bangle{e^{-ix^2 v} Q^*Q(e^{iy^2v}\varphi(y)),\varphi}_{L^2(\RR)} \leq C \Vert \varphi \Vert_{L^2(\RR)}^2. \end{equation} Letting $p(x,\xi) = \sigma_L(Q^*Q)$ 
\begin{align*}
I_v(z_0) & = \int e^{i(x-y)\xi} p(x,\xi) e^{i(y^2-x^2)v} \varphi_{z_0}(y) \varphi_{z_0}(x)  \ dy\dbar\xi dx\\
&=\int e^{i(x-y)\xi} p(x+z_0,\xi) e^{i(y^2-x^2)v}e^{i(y-x)2vz_0}\varphi(y) \varphi(x)  \ dy\dbar\xi dx\\
& = \int e^{i(x-y)\xi} p(x+z_0,\xi + 2vz_0) e^{i(y^2-x^2)v} \varphi(y)\varphi(x)  \ dy\dbar\xi dx\\
& = \int e^{i(x-y)\eta} p(x+z_0,2vx+ 2vz_0+\eta) e^{i(y-x)^2v} \varphi(y)\varphi(x)  \ dy\dbar\eta dx.
\end{align*}
In the last line, we made the substitution $\xi = 2vx + \eta$. Now, expanding around $\eta = 0$ \begin{align*} p(x+z_0 &,2vx+2vz_0 + \eta)\\
& = p(x+z_0, 2vx + 2vz_0) + \int_0^1 p_2(x+z_0,2vx+2vz_0 + t\eta) \eta \ dt.\end{align*}
Combining this with the above, the first term simplifies via Fourier inversion giving\[ \int p(x+ z_0, 2vx + 2vz_0) \varphi(x)^2  \ dx.\]
Now, we will show that as $|z_0| \to \infty$ the second term goes to zero. Let $\chi \in C_0^\infty$ be a bump function $\chi(\eta) \in [0,1]$, $\chi(\eta) \equiv 1$ for $|\eta| < 1$ and $\chi(\eta) \equiv 0$ for $|\eta|>2$. Using this we can estimate the second term where $\eta$ is small \begin{align*}
 & \Big| \int \int_{t=0}^{t=1} e^{i(x-y)\eta} p_2(x+z_0,2vx+ 2vz_0+t\eta)\\
 & \qquad \qquad\qquad \qquad\qquad \qquad \eta \chi(\eta)e^{i(y-x)^2v} \varphi(y)\varphi(x) \ dt dy\dbar\eta dx\Big|\\
 & \leq \int \int_0^1 |p_2(x+z_0,2vx+2vz_0+t\eta)| |\eta| |\chi(\eta)| |\varphi(y)||\varphi(x)| \ dtdy\dbar\eta dx\\
  & \leq  C \int \int_0^1 (1+|x+z_0| + |2vx+2vz_0+t\eta|)^{\epsilon_0-1}\\ 
  & \qquad \qquad\qquad\qquad\qquad\qquad |\eta| |\chi(\eta)| |\varphi(y)||\varphi(x)| \ dtdy\dbar\eta dx\\
  & \leq  C' (1-\epsilon + |z_0|)^{\epsilon_0 -1}.
\end{align*}
In the last line, we used that on the support of $\varphi(x)$, $|x|\leq \epsilon$, so \[ |x + z_0| \geq |z_0|-|x| \geq |z_0| - \epsilon \] which gives the above inequality. Similarly, when $\eta$ is bounded away from zero, we can integrate by parts, obtaining the bounds (using $v\in [-1,1]$)
 \begin{align*}
 & \Big | \int \int_{t=0}^{t=1} \frac{1}{(-i\eta)^3}\partial_y^3 (e^{i(x-y)\eta}) p_2(x+z_0,2vx+ 2vz_0+t\eta)\\
 & \qquad \qquad\qquad\qquad\qquad \eta(1- \chi(\eta))e^{i(y-x)^2v} \varphi(y)\varphi(x) \ dt dy\dbar\eta dx\Big| \\
 & =  \Big| \int \int_{t=0}^{t=1} \frac{1}{\eta^3}(e^{i(x-y)\eta}) p_2(x+z_0,2vx+ 2vz_0+t\eta)\\
 & \qquad \qquad\qquad\qquad\qquad \eta(1- \chi(\eta)) \partial_y^3 (e^{i(y-x)^2v} \varphi(y))\varphi(x) \ dt dy\dbar\eta dx\Big|\\
  & \leq C''  (1 - \epsilon + |z_0|)^{\epsilon_0-1} \int \int_{t=0}^{t=1} \frac{|1-\chi(\eta)|}{|\eta|^2} |\partial_y^3 (e^{i(y-x)^2v} \varphi(y))| |\varphi(x)| \  dt dy\dbar\eta dx\\
   & \leq C'''  (1 - \epsilon + |z_0|)^{\epsilon_0-1}.
\end{align*}

Combining all of this into \eqref{eq:def-I} we thus have \begin{equation}\label{eq:p-bounds} \int p(x+z_0, 2vx+2vz_0)\varphi(x)^2 dx + R(z_0) \leq C \Vert \varphi \Vert_{L^2(\RR)}^2, \end{equation} where $R(z_0)$ is the sum of the above two terms, and by the above bounds, we know that $R(z_0) \to 0$ as $|z_0|\to\infty$. Because all derivatives of $p$ go to zero, $p$ is uniformly continuous, and thus for $\delta>0$, we can take $\epsilon$ in the definition of $\varphi$ small enough so that \[\int [\sup_{x' \in B_\epsilon(0)} [p(x'+z_0,2vx'+2vz_0)] -p(x+z_0,2vx+2vz_0) ]\varphi(x)^2 \ dx < \delta .\] This combined with \eqref{eq:p-bounds} gives bounds of the form \[ |p(z_0, 2vz_0)| \leq P\] for some $P\geq0$ (depending on $\epsilon$) and for large $|z_0|$ (and thus for all $z_0$ because $p$ is certainly bounded inside of a compact set). From the above proof, it is clear that $P$ does not depend on $v\in [-1,1]$, so we know that \[ \sup_{|x| \geq  2|\xi|}| p(x,\xi)|\leq P\] by taking appropriate $v$ and $z_0$ in these bounds. For $q$ the symbol of $Q$, this shows that 
\[ \sup_{|x| \geq  2|\xi|}| q(x,\xi)|\leq \sqrt P.\]
Now, to extend this to all of $\RR\times \RR$, taking the Fourier transform of the operator $Q$, denoted $\hat Q$ and applying the above argument to $\hat Q$ (which is still bounded on $L^2$ because the Fourier transform is an isometry $L^2 (\RR)\to L^2(\RR)$) we have that 
\[ \sup_{|x| \geq  2|\xi|}| q(\xi,x)|\leq \sqrt {P'}.\] This shows that $q$ is bounded.
\end{proof}

%\nocite{*}

\bibliography{ChodoshREPPSIDObibliography}

\begin{thebibliography}{10}

\bibitem{Beals:CommCharPSIDO}
{\sc R.~Beals}, {\em Characterization of pseudodifferential operators and
  applications}, Duke Math. J., 44 (1977), pp.~45--57.

\bibitem{chod:thesis}
{\sc O.~Chodosh}, {\em Honors thesis: Infinite matrix representations of
  classes of pseudo-differential operators},
  \url{http://math.stanford.edu/theses/Chodosh%20Honors%20Thesis.pdf},  (2010).

\bibitem{Halmos:IntOp}
{\sc P.~R. Halmos and V.~S. Sunder}, {\em Bounded integral operators on L2
  spaces}, A Series of modern surveys in mathematics ; 96, Springer-Verlag,
  Berlin, 1978.

\bibitem{Melrose:MAnotes}
{\sc R.~Melrose}, {\em Unpublished lecture notes: Introduction to microlocal
  analysis}, \url{http://math.mit.edu/~rbm/18.157-F09/18.157-F09.html},
  (2009).

\bibitem{ReedSimon:FA}
{\sc M.~Reed and B.~Simon}, {\em Methods of modern mathematical physics},
  Academic Press, New York, 1980.

\bibitem{Ruzhansky:torus}
{\sc M.~Ruzhansky and V.~Turunen}, {\em On the toroidal quantization of
  periodic pseudo-differential operators}, Numerical Functional Analysis and
  Optimization, 30 (2009), pp.~1098--1124.

\bibitem{RuzhanskyTurunen:PSIDOsymmetries}
\leavevmode\vrule height 2pt depth -1.6pt width 23pt, {\em Pseudo-differential
  operators and symmetries}, vol.~2 of Pseudo-Differential Operators. Theory
  and Applications, Birkh{\"a}user Verlag, Basel, 2010.
\newblock Background analysis and advanced topics.

\bibitem{Ruzhansky:QuantTorus}
\leavevmode\vrule height 2pt depth -1.6pt width 23pt, {\em Quantization of
  pseudo-differential operators on the torus}, J. Fourier Anal. Appl., 16
  (2010), pp.~943--982.

\bibitem{Shubin:PDO}
{\sc M.~A. Shubin}, {\em Pseudodifferential operators and spectral theory},
  Springer-Verlag, Berlin, 1987.

\bibitem{Taylor:PDE2}
{\sc M.~E. Taylor}, {\em Partial differential equations {II}. {Q}ualitative
  studies of linear equations}, vol.~116 of Applied Mathematical Sciences,
  Springer, New York, second~ed., 2011.

\end{thebibliography}
\bibliographystyle{siam}
\end{document}